\theoremstyle{definition}
\newtheorem{definition}{Definition}
\theoremstyle{theorem}
\newtheorem{theorem}{Theorem}
\theoremstyle{remark}
\newtheorem{remark}{Remark}
\theoremstyle{theorem}
\newtheorem{lemma}[theorem]{Lemma}     
\newtheorem{corollary}[theorem]{Corollary} 
\journal{}
\begin{document}

\begin{frontmatter}

%% Title, authors and addresses

%% use the tnoteref command within \title for footnotes;
%% use the tnotetext command for theassociated footnote;
%% use the fnref command within \author or \affiliation for footnotes;
%% use the fntext command for theassociated footnote;
%% use the corref command within \author for corresponding author footnotes;
%% use the cortext command for theassociated footnote;
%% use the ead command for the email address,
%% and the form \ead[url] for the home page:
%% \title{Title\tnoteref{label1}}
%% \tnotetext[label1]{}
%% \author{Name\corref{cor1}\fnref{label2}}
%% \ead{email address}
%% \ead[url]{home page}
%% \fntext[label2]{}
%% \cortext[cor1]{}
%% \affiliation{organization={},
%%            addressline={}, 
%%            city={},
%%            postcode={}, 
%%            state={},
%%            country={}}
%% \fntext[label3]{}

\title{Regularity properties of distributions of correspondences without countable generation: applications to large games} %% Article title

%% use optional labels to link authors explicitly to addresses:
%% \author[label1,label2]{}
%% \affiliation[label1]{organization={},
%%             addressline={},
%%             city={},
%%             postcode={},
%%             state={},
%%             country={}}
%%
%% \affiliation[label2]{organization={},
%%             addressline={},
%%             city={},
%%             postcode={},
%%             state={},
%%             country={}}

\author{Motoki Otsuka} %% Author name
\ead{s2220082@u.tsukuba.ac.jp}
%% Author affiliation
\affiliation{organization={University of Tsukuba},%Department and Organization
            addressline={1-1-1}, 
            city={Tennodai},
            postcode={305-8577}, 
            state={Ibaraki},
            country={Japan}}

%% Abstract
\begin{abstract}
%% Text of abstract
We show that each of the regularity properties of regular conditional distributions of correspondences—convexity, closedness, compactness, and preservation of closed graphs—is equivalent to the  condition of nowhere equivalence. This result does not require any countable-generation assumptions. As an application, we establish the existence of a pure-strategy equilibrium for large games with general trait spaces. The trait space may be an arbitrary measurable space. As a corollary, we obtain the existence of a pure-strategy equilibrium in semi-anonymous settings in which payoffs depend, in addition to agents’ own actions, on the joint distribution over the space of agents and actions.
\end{abstract}

%%Graphical abstract
%\begin{graphicalabstract}
%\includegraphics{grabs}
%\end{graphicalabstract}

%%Research highlights
%\begin{highlights}
%\item Research highlight 1
%\item Research highlight 2
%\end{highlights}

%% Keywords
\begin{keyword}
%% keywords here, in the form: keyword \sep keyword
Nowhere equivalence \sep Distribution of correspondence \sep Regular conditional distribution of correspondence \sep Large game with traits \sep Nash equilibrium 
%% PACS codes here, in the form: \PACS code \sep code

%% MSC codes here, in the form: \MSC code \sep code
%% or \MSC[2008] code \sep code (2000 is the default)

\end{keyword}

\end{frontmatter}

%% Add \usepackage{lineno} before \begin{document} and uncomment 
%% following line to enable line numbers
%% \linenumbers

%% main text
%%

%% Use \section commands to start a section
\section{Introduction}
\label{sec1}
%% Labels are used to cross-reference an item using \ref command.
This paper investigates the regularity properties of the distribution of correspondences—convexity, closedness, compactness, and preservation of closed graphs. It is well known that, on general probability spaces, these properties may fail; the Lebesgue unit interval is a canonical example; see \cite[Example~1]{Sun1996-eo} and the references therein. Accordingly, the literature focuses on structural conditions under which they hold. It is first shown in \cite{Sun1996-eo} that they hold on atomless Loeb probability spaces, and the results extend to the broader class of saturated probability spaces. Moreover, saturated spaces are shown to be not only sufficient but also necessary for each of these regularity properties; see \cite{Keisler2009-ib}. In turn, these regularity properties, together with the saturation property, form the foundation for many results, including the infinite-dimensional extensions of Lyapunov’s convexity theorem and Fatou’s lemma (see \cite{Podczeck2008-zm,Sun2008-pm,Khan2014-ee,Khan2016-oa}), equilibrium existence theorems in general equilibrium theory and game theory (see \cite{Lee2023-yj,Sun2015-yg,Khan2016-lo,Jang2020-dg,Otsuka2024-dt}), and bang–bang/purification principles and subdifferential calculus in optimization (see \cite{Khan2014-ei,Mordukhovich2018-ay}).

More recently, the equivalence result under saturation is extended in \cite{He2018-gj} by separating the $\sigma$-algebra with respect to which a correspondence is measurable from the $\sigma$-algebra with respect to which selections are measurable. It is shown there that nowhere equivalence between the two $\sigma$-algebras is necessary and sufficient for the regularity properties of the distribution of correspondences. The equivalence is further extended there to regular conditional distributions, and this extension is applied in \cite{He2018-gj} to the infinite-dimensional Lyapunov-type theorem for conditional expectations and to equilibrium analysis in mathematical economics. The advantage of this approach is that it applies to standard probability spaces, which are generally not saturated. However, the analysis assumes that the $\sigma$-algebra for the correspondence—and hence the $\sigma$-algebras we condition on—are countably generated; accordingly, the framework in \cite{He2018-gj} is, more or less, restricted to standard probability spaces.

Our main result removes this restriction. While the equivalence between nowhere equivalence and the regularity properties cannot be obtained for distributions without assuming countable generation, we show that for regular conditional distributions of correspondences the same equivalence does hold without any countable-generation assumption. The extension is achieved at essentially no additional cost: the only difference from \cite{He2018-gj} is that, for simplicity, we work with
the closed graph property rather than upper semicontinuity. Moreover, the cardinality condition on the Polish range space is weakened from uncountable to infinite. Accordingly, our results extend those of \cite{He2018-gj} to general (not necessarily standard) probability spaces.\footnote{Technically speaking, both directions (necessity and sufficiency) in the proof of \cite[Theorem 2]{He2018-gj} rely on the countable-generation assumption; obtaining a characterization without it is nontrivial.}

We apply these regularity properties to the equilibrium existence problem in large games with traits. Many economic environments exhibit mass phenomena: each agent’s payoff depends on only aggregate behavior while each individual has negligible impact on the aggregate. These situations are naturally modeled as large games, that is, games with a continuum of players. A notable difference from finite-player models is that, under weaker conditions, pure-strategy equilibria exist in large games. Recently, to incorporate players’ biological or social traits into the analysis of player interdependence, large games with traits are introduced in \cite{Khan2013-pk}, in which a player’s payoff depends on her own action and on the joint distribution of trait-action pairs; in this literature, however, the trait space has typically been restricted to a compact metric or Polish space; see, e.g., \cite{Khan2013-iz, Qiao2014-vw,Qiao2016-tf,He2017-cr,Wu2022-as}.

Building on our main result—the nowhere-equivalence characterization without any countable-generation assumption—we establish the existence of pure-strategy equilibria in large games with general trait spaces. In particular, the trait space may be an arbitrary measurable space. As a corollary, by taking the trait space to be the agent space itself, we obtain pure-strategy equilibria in semi-anonymous settings where each agent’s payoff depends, in addition to her own action, on the joint distribution over the space of agents and actions. We also note that when the trait space is an arbitrary measurable space, the $\sigma$-algebra generated by a trait function—that is, a measurable function assigning a trait to each agent— is generally not countably generated; hence our main result is needed to handle large games beyond Polish trait spaces.

The remainder of the paper is organized as follows. Section~\ref{sec:Basics} introduces basic definitions and a preliminary result. Section~\ref{sec:Regular} presents the main results on nowhere equivalence and regular conditional distributions of correspondences. Section~\ref{sec:Applications} gives applications to large games. Section 5 contains the proof of Theorem~\ref{the:main}.

%% Use \subsection commands to start a subsection.
\section{Basics}
\label{sec:Basics}

%% Use \subsubsection, \paragraph, \subparagraph commands to 
%% start 3rd, 4th and 5th level sections.
%% Refer following link for more details.
%% https://en.wikibooks.org/wiki/LaTeX/Document_Structure#Sectioning_commands

Throughout, \((T,\mathcal T,\lambda)\) denotes a complete, atomless probability space and $\mathcal F$ is a sub-$\sigma$-algebra of $\mathcal T$, unless otherwise specified.
Let $X$ be a Polish (completely metrizable separable) space. A correspondence $F:T \rightrightarrows X$ maps $t\in T$ to a nonempty subset $F(t)\subseteq X$; i.e., $F:T\to \mathcal P(X)\setminus\{\emptyset\}$.
$F$ is \emph{closed/compact-valued} if $F(t)$ is closed/compact for $\lambda$-a.e.\ $t$.
$F$ is \emph{measurable} if its graph
    \[
      \operatorname{Gr}(F):=\{(t,x)\in T\times X:\ x\in F(t)\}
    \]
    belongs to the product $\sigma$-algebra $\mathcal T\otimes\mathcal B(X)$, where $\mathcal B(X)$ is the Borel $\sigma$-algebra of $X$.
A mapping $f:T\to X$ is a (measurable) selection of $F$ if $f(t)\in F(t)$ for $\lambda$-almost all $t\in T$.

Let $G:Y\rightrightarrows Z$ with $Y,Z$ Polish spaces.
$G$ is \emph{upper hemicontinuous} at $y_0\in Y$ if for any open set $O_Z\subseteq Z$ with $G(y_0)\subseteq O_Z$, there exists an open neighborhood $O_Y$ of $y_0$ such that for all $y\in O_Y$, $G(y)\subseteq O_Z$.
$G$ is upper hemicontinuous on $Y$ if the above holds at every $y\in Y$. $G$ \emph{has a closed graph} if its graph is a closed subset of $Y \times Z.$

Let $\mathcal F$ be a sub-$\sigma$-algebra of $\mathcal T$, and let $E\in\mathcal F$ with $\lambda(E)>0$.
Define the restricted probability space on $E$ as $(E,\mathcal F^E,\lambda^E)$ where $\mathcal F^E:=\{E\cap E'\,:\,E'\in\mathcal F\}$ and
    \[
      \lambda^E(A):=\frac{\lambda(A)}{\lambda(E)}\qquad (A\in\mathcal F^E).
    \]
Similarly one can define $(E,\mathcal T^E,\lambda^E)$ using the full $\sigma$-algebra $\mathcal T$.

For a measurable space \((S,\mathcal S)\) we write
\(
\mathcal M(S,\mathcal S)
\)
for the set of all probability measures on \((S,\mathcal S)\).
When the underlying $\sigma$-algebra $\mathcal S$ is clear, we simply write $\mathcal M(S)$.
When \(S\) is a Polish space, $\mathcal M(S)$ denotes $\mathcal M\bigl(S,\mathcal B(S)\bigr)$, i.e. the set of Borel probability measures on \(S\).
We equip $\mathcal M(S)$ with the topology of weak convergence.
Let \((S_1,\mathcal S_1)\) and \((S_2,\mathcal S_2)\) be measurable spaces, \(f:S_1\to S_2\) measurable, and \(\mu\in\mathcal M(S_1,\mathcal S_1)\).
We write the pushforward as
\[
\mu f^{-1}\in\mathcal M(S_2,\mathcal S_2),\qquad
(\mu f^{-1})(A):=\mu\bigl(f^{-1}(A)\bigr)\ \text{for }A\in\mathcal S_2.
\]
For \(\mu\in\mathcal M\bigl(S_1\times S_2,\ \mathcal S_1\otimes \mathcal S_2\bigr)\),
let \(\pi_{S_1}:S_1\times S_2\to S_1\) and \(\pi_{S_2}:S_1\times S_2\to S_2\) be the coordinate projections and define the marginals by
\[
\mu|_{S_1} := \mu \pi_{S_1}^{-1}\in \mathcal M(S_1,\mathcal S_1),
\qquad
\mu|_{S_2} := \mu \pi_{S_2}^{-1}\in \mathcal M(S_2,\mathcal S_2).
\]

An $\mathcal F$-measurable \emph{transition probability (stochastic kernel or Young measure)} from $T$ to a Polish space $X$
    is a mapping $\phi:T\to\mathcal M(X)$ such that for every $B\in\mathcal B(X)$,
    the map $t\mapsto\phi(t,B)$ is $\mathcal F$-measurable.
Denote the set of all such kernels by $\mathcal R^{\mathcal F}(X)$ (or simply $\mathcal R^{\mathcal F}$ when clear).
A function $c:T\times X\to\mathbb R$ is $\mathcal F$-\emph{measurable Carath\'eodory} if (i) for each $x\in X$, the map $t\mapsto c(t,x)$ is $\mathcal F$-measurable;
(ii) for each $t\in T$, the map $x\mapsto c(t,x)$ is continuous on $X$.
The weak topology on $\mathcal R^{\mathcal F}(X)$ is the coarsest topology making the functional
    \[
      \phi \ \longmapsto\ \int_T \!\Big[\int_X c(t,x)\,\phi(t,dx)\Big]\,d\lambda(t)
    \]
continuous for every bounded $\mathcal F$-measurable Carath\'eodory function $c$ on $T\times X$.

We now introduce the central notion of \emph{nowhere equivalence} \cite[Definition~1]{He2017-cr}.

\begin{definition}
    $\mathcal T$ is \emph{nowhere equivalent} to $\mathcal F$ if for every
    $D\in\mathcal T$ with $\lambda(D)>0$, there exists a $\mathcal T$-measurable subset
    $D_0\subseteq D$ such that
    \[
      \lambda(D_0 \triangle D_1) > 0 \qquad \text{for all } D_1 \in \mathcal F^{D},
    \]
    where
    $A\triangle B := (A\setminus B)\cup(B\setminus A)$ denotes the symmetric difference.
  \end{definition}

Next, we present two technically convenient conditions—\emph{conditional atomlessness}
\cite[Definition~4.3]{Hoover1984-xr} and the existence of an
\emph{atomless independent supplement} \cite[Definition~2]{He2017-cr}—and
show that these three conditions are equivalent.
We will use this equivalence repeatedly.
\begin{definition}
\begin{enumerate}
    \item The $\sigma$-algebra $\mathcal T$ is \emph{conditionally atomless} over $\mathcal F$ if for every $D\in\mathcal T$ with $\lambda(D)>0$, there exists a $\mathcal T$-measurable subset $D_0\subset D$ such that, on some set of positive probability,

   $$
   0<\lambda(D_0\mid \mathcal F)<\lambda(D\mid \mathcal F).
   $$

   \item
   The $\sigma$-algebra $\mathcal F$ admits an \emph{atomless independent supplement} in $\mathcal T$ if there exists another sub-$\sigma$-algebra $\mathcal H\subset\mathcal T$ such that $(T,\mathcal H,\lambda)$ is atomless, and for any $C_1\in\mathcal F$ and $C_2\in\mathcal H$,

   $$
   \lambda(C_1\cap C_2)=\lambda(C_1)\lambda(C_2).
   $$
\end{enumerate}
\end{definition}

\begin{theorem}\label{the:nowhere}
   Let $(T, \mathcal T, \lambda)$ be an atomless probability space and $\mathcal F$ a sub-$\sigma$-algebra of $\mathcal T.$ The following conditions are equivalent.
\begin{enumerate}[label=(\roman*)]
  \item $\mathcal T$ is nowhere equivalent to $\mathcal F$.
  \item $\mathcal T$ is conditional atomless over $\mathcal F$.
  \item $\mathcal F$ admits an atomless independent supplement in $\mathcal T$.
\end{enumerate}
\end{theorem}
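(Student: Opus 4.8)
The plan is to prove \((\mathrm i)\Leftrightarrow(\mathrm{ii})\) by direct manipulation of conditional expectations, and then to close the equivalence with \((\mathrm{iii})\Rightarrow(\mathrm{ii})\) and \((\mathrm{ii})\Rightarrow(\mathrm{iii})\); throughout I write \(\lambda(A\mid\mathcal F)=\mathbb E[\mathbf 1_A\mid\mathcal F]\) and set \(h:=\lambda(D\mid\mathcal F)\). For \((\mathrm{ii})\Rightarrow(\mathrm i)\), fix \(D\) and let \(D_0\subseteq D\) be the witness of conditional atomlessness, so \(0<\lambda(D_0\mid\mathcal F)<h\) on a set of positive measure. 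Any \(D_1\in\mathcal F^{D}\) has the form \(D\cap E'\) with \(E'\in\mathcal F\), whence \(\lambda(D_1\mid\mathcal F)=\mathbf 1_{E'}h\in\{0,h\}\) a.e.; if \(\lambda(D_0\triangle D_1)=0\) then \(\lambda(D_0\mid\mathcal F)=\lambda(D_1\mid\mathcal F)\), contradicting strict betweenness. Hence \(\lambda(D_0\triangle D_1)>0\) for every \(D_1\in\mathcal F^{D}\), which is nowhere equivalence. For the converse, let \(D_0\subseteq D\) witness nowhere equivalence and put \(g:=\lambda(D_0\mid\mathcal F)\le h\). If \(\{0<g<h\}\) were null, then with \(E':=\{g=h\}\in\mathcal F\) the identities \(\int_{E'^{c}}g\,d\lambda=0\) and \(\int_{E'}(h-g)\,d\lambda=0\) force \(D_0\subseteq E'\) and \(D\cap E'\subseteq D_0\) a.e., i.e.\ \(D_0=D\cap E'\) modulo null sets, contradicting the choice of \(D_0\). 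So \(\{0<g<h\}\) has positive measure, giving \((\mathrm{ii})\).

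For \((\mathrm{iii})\Rightarrow(\mathrm{ii})\) I would argue by contradiction. Since \((T,\mathcal H,\lambda)\) is atomless it carries a random variable \(U\colon T\to[0,1]\) that is \(\mathcal H\)-measurable with \(\lambda U^{-1}\) equal to Lebesgue measure (Sierpi\'nski; no countable generation is needed), and \(U\) is independent of \(\mathcal F\) because \(\sigma(U)\subseteq\mathcal H\). Suppose \((\mathrm{ii})\) fails, so some \(D\) with \(\lambda(D)>0\) is a \emph{conditional atom}: every \(\mathcal T\)-measurable \(D_0\subseteq D\) has \(\lambda(D_0\mid\mathcal F)\in\{0,h\}\) a.e. Applying this to \(D\cap H\) for \(H\in\mathcal H\) and setting \(A_H:=\{\lambda(D\cap H\mid\mathcal F)=h\}\in\mathcal F\), a short computation gives \(D\cap H=D\cap A_H\) a.e. Taking \(H=\{U\le q\}\) over \(q\in\mathbb Q\cap[0,1]\) produces \(\mathcal F\)-sets \(A_q\) with \(D\cap\{U\le q\}=D\cap A_q\) a.e.; hence the \(\mathcal F\)-measurable function \(V:=\inf\{q\in\mathbb Q: t\in A_q\}\) agrees with \(U\) on \(D\) a.e. But \(U\perp\mathcal F\supseteq\sigma(V)\) makes \(U\) and \(V\) independent with \(U\) atomlessly distributed, so \(\lambda(U=V)=0\), contradicting \(\lambda(D)>0\). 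Thus no conditional atom exists and \((\mathrm{ii})\) holds.

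For \((\mathrm{ii})\Rightarrow(\mathrm{iii})\) the plan is to build, inside \(\mathcal T\), a sequence \((H_n)_{n\ge1}\) of \emph{conditionally independent, conditionally fair coins} over \(\mathcal F\) and set \(\mathcal H:=\sigma(H_n:n\ge1)\). Writing \(\mathcal G_{n-1}:=\sigma(H_1,\dots,H_{n-1})\), whose atoms are the \(2^{n-1}\) cylinders \(A_j\), I would define \(H_n\) by choosing within each \(A_j\) a \(\mathcal T\)-subset with \(\lambda(H_n\cap A_j\mid\mathcal F)=\tfrac12\lambda(A_j\mid\mathcal F)\). An easy induction then yields \(\lambda(C\mid\mathcal F)=2^{-k}\) a.e.\ for every length-\(k\) cylinder \(C\). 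Integrating against \(\mathbf 1_{C_1}\) with \(C_1\in\mathcal F\) gives \(\lambda(C_1\cap C)=\lambda(C_1)\lambda(C)\), and a Dynkin argument extends independence to all of \(\mathcal H\); pushing \(\lambda\) forward under \((\mathbf 1_{H_n})_n\) yields the Bernoulli\((1/2)\) product measure, so \((T,\mathcal H,\lambda)\) is atomless. This delivers the atomless independent supplement.

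The main obstacle is the step defining each \(H_n\): the \emph{conditional halving lemma}, that conditional atomlessness forces every \(D\in\mathcal T\) to have a subset \(D'\subseteq D\) with \(\lambda(D'\mid\mathcal F)=\tfrac12\,\lambda(D\mid\mathcal F)\) a.e. I would prove it by exhaustion. Let \(\Sigma:=\{E\in\mathcal T: E\subseteq D,\ \lambda(E\mid\mathcal F)\le\tfrac12 h\ \text{a.e.}\}\). By the countable chain condition of the measure algebra, every inclusion-chain in \(\Sigma\) admits a countable cofinal nested subsequence whose union still lies in \(\Sigma\) (monotone convergence for conditional expectations), so Zorn gives an inclusion-maximal \(E^\ast\in\Sigma\). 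If \(\lambda(E^\ast\mid\mathcal F)<\tfrac12 h\) on a positive \(\mathcal F\)-set \(F_0\), then \((D\setminus E^\ast)\cap F_0\) has positive conditional measure there, and adjoining to \(E^\ast\) a positive-measure subset of it whose conditional measure is \(\le\tfrac12 h-\lambda(E^\ast\mid\mathcal F)\) would yield a strictly larger member of \(\Sigma\), contradicting maximality; hence \(\lambda(E^\ast\mid\mathcal F)=\tfrac12 h\). The delicate point—and the crux of the whole theorem—is producing positive-measure subsets of prescribed, arbitrarily small conditional measure, which I expect to extract by iterating conditional atomlessness. This is the conditional analogue of Lyapunov's convexity theorem, and since no countable-generation hypothesis is available the argument must stay purely measure-algebraic.
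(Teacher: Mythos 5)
Your handling of (i)$\Leftrightarrow$(ii) and (iii)$\Rightarrow$(ii) is correct, and it is a genuinely different route from the paper's: the paper cites \cite[Lemma~2, Footnote~29]{He2017-cr} for (i)$\Leftrightarrow$(ii) and (iii)$\Rightarrow$(i), and only writes out (ii)$\Rightarrow$(iii), which it obtains in one stroke from Hoover--Keisler \cite[Lemma~4.4(iii)]{Hoover1984-xr}: conditional atomlessness yields a $\mathcal T$-measurable $U$ whose conditional law given $\mathcal F$ is Lebesgue measure, and $\sigma(U)$ is then the supplement. Your direct arguments buy self-containedness: the conditional-expectation proof of (i)$\Leftrightarrow$(ii) is clean, and your (iii)$\Rightarrow$(ii) via the sets $A_H$, the $\mathcal F$-measurable $V:=\inf\{q\in\mathbb Q: t\in A_q\}$, and the conclusion $\lambda(U=V)=0$ from independence plus atomlessness of the law of $U$, is sound --- just note that the identity $D\cap H=D\cap A_H$ a.e.\ requires applying the conditional-atom dichotomy to \emph{both} $D\cap H$ and $D\cap H^{c}$ together with $\lambda(D\cap H\mid\mathcal F)+\lambda(D\cap H^{c}\mid\mathcal F)=h$, which your ``short computation'' presumably intends.

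The gap is exactly where you flag it, in (ii)$\Rightarrow$(iii). Your dyadic-coin construction amounts to reproving the special case of Hoover--Keisler that the paper cites, and it stands or falls on the sub-lemma that every positive-measure $R\in\mathcal T$ contains a positive-measure subset of pointwise-small conditional measure; ``I expect to extract by iterating conditional atomlessness'' is not a proof, and the obvious iteration fails because the witness of conditional atomlessness only controls $\lambda(\cdot\mid\mathcal F)$ on \emph{some} positive $\mathcal F$-set, not a.e. The step can in fact be closed with an $\mathcal F$-measurable selection trick: let $R_1\subseteq R$ be the witness, set $B:=\{\lambda(R_1\mid\mathcal F)\le\tfrac12\lambda(R\mid\mathcal F)\}\in\mathcal F$ and $R':=(R_1\cap B)\cup\bigl((R\setminus R_1)\cap B^{c}\bigr)$; then $\lambda(R'\mid\mathcal F)\le\tfrac12\lambda(R\mid\mathcal F)$ a.e., and $\lambda(R')>0$, since $\lambda(R')=0$ would force $\lambda(R_1\mid\mathcal F)\in\{0,\lambda(R\mid\mathcal F)\}$ a.e., contradicting the witness property. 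Iterating gives $S_n\subseteq R$ with $\lambda(S_n)>0$ and $\lambda(S_n\mid\mathcal F)\le 2^{-n}$ a.e.; for the $\mathcal F$-measurable threshold $\varepsilon:=\tfrac12 h-\lambda(E^\ast\mid\mathcal F)$ appearing in your exhaustion step, first choose $m$ with $\lambda(R\cap F_m)>0$ where $F_m:=\{\varepsilon>2^{-m}\}\in\mathcal F$, then run the iteration inside $R\cap F_m$ and use $S\subseteq F_m\in\mathcal F$ to get $\lambda(S\mid\mathcal F)=0$ off $F_m$. With this lemma in hand, your Zorn/ccc halving argument and the induction $\lambda(C\mid\mathcal F)=2^{-k}$ for cylinders, followed by the $\pi$--$\lambda$ extension and the Bernoulli pushforward, all go through. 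So the architecture is right and instructively more elementary than the paper's, but as submitted the crux you yourself single out is missing.
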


\begin{proof}
Although Lemma~2 in \cite{He2017-cr} establishes this equivalence under the additional hypothesis that $\mathcal F$ is countably generated, the parts proving (i)$\Leftrightarrow$(ii) and (iii)$\Rightarrow$(i) do not use this hypothesis and therefore remain valid in our present setting; see \cite[Footnote~29]{He2017-cr}. Hence it suffices to prove (ii)$\Rightarrow$(iii).
    
Fix the Lebesgue probability space $([0,1],\mathcal B([0,1]),m)$ and consider the
$\mathcal F$-measurable transition probability
\[
K(t,\cdot)=m \qquad (\forall\,t\in T).
\]
Since $\mathcal T$ is conditionally atomless over $\mathcal F$, Lemma 4.4 (iii) in \cite{Hoover1984-xr} yields an $\mathcal T$-measurable
$U:T\to[0,1]$ such that for every Borel $B\subset[0,1]$,
\begin{equation}\label{eq:condlaw}
\lambda(\{U\in B\}\mid\mathcal F)=K(\cdot,B)=m(B)\qquad\text{a.s.}
\end{equation}

\medskip
Let $\mathcal H$ be the $\sigma$-algebra generated by $U$. For any $F\in\mathcal F$ and any Borel $B\subset[0,1]$,
we have
\[
\lambda(\{U\in B\}\cap F)
=\int_F \lambda(\{U\in B\}\mid\mathcal F)\,d\lambda
=\int_F m(B)\,d\lambda
=m(B)\,\lambda(F).
\]
Taking $F=T$ gives $\lambda \bigl( \{U \in B\})=m(B)$, so $\mathcal H$ is independent of $\mathcal F$ and $(T, \mathcal H, \lambda)$ is atomless. Hence, $(T,\mathcal H,\lambda)$ is  an atomless independent supplement of $\mathcal F$.
\end{proof}

\begin{remark}
    The $\sigma$-algebra $\mathcal{T}$ is said to be \emph{relatively saturated} with respect to $\mathcal{F}$ \cite[Definition~2 (ii)]{He2017-cr} if for any Polish spaces $X$ and $Y$, any measure $\mu \in \mathcal{M}(X \times Y)$, and any $\mathcal{F}$-measurable mapping $f$ from $T$ to $X$ with $\mu|_X = \lambda  f^{-1}$, there exists a $\mathcal{T}$-measurable mapping $g$ from $T$ to $Y$ such that $\mu=\lambda  (f,g)^{-1}$.
When $\mathcal{F}$ is countably generated,\footnote{A probability space
is \emph{countably generated} if its $\sigma$-algebra is generated by a countable collection of measurable
sets together with the null sets.} nowhere equivalence is also
equivalent to relative saturation; see \cite[Lemma~2]{He2017-cr}.
 Without the countable generation assumption, nowhere equivalence is, in general, stronger than relative saturation; see \cite[Footnote 29]{He2017-cr}.
\end{remark}

\section{Regular conditional distributions of correspondences}
\label{sec:Regular}
%% Inline mathematics is tagged between $ symbols.

Let $X$ be a Polish space and
$f:T\to X$ be $\mathcal T$-measurable. 
We write $\mu^{\,f\mid\mathcal F}$ for the regular conditional
  distribution (RCD) of $f$ given $\mathcal F$, that is, $\mu^{\,f\mid\mathcal F}:T\times\mathcal B(X)\to[0,1]$ satisfies: (i) for each $t\in T$, $\mu^{\,f\mid\mathcal F}(t,\cdot)$ is a Borel probability measure on $X$; (ii) for every Borel set $B\subseteq X$,
          \[
            \mu^{\,f\mid\mathcal F}(\cdot,B)\;=\;\mathbb{E}\big[\,\mathbf 1_{\{f\in B\}}\ \big|\ \mathcal F\,\big],
          \]
          i.e., it is the conditional expectation of the indicator $1_{\{f\in B\}}$ given $\mathcal F$.\footnote{}
          
Let $F:T\rightrightarrows X$ be an $\mathcal F$-measurable correspondence. Define
\[
D_F^{\mathcal T}
  := \bigl\{\, \lambda  f^{-1} \in \mathcal M(X)
      \;:\; f \text{ is a } \mathcal T\text{-measurable selection of } F \,\bigr\}.
\]
i.e., the set of distributions induced \(\mathcal T\)-measurable selections of \(F\) under \(\lambda\).
In parallel, define
  \[
    \mathcal R_{F}^{(T,\mathcal F)}
      := \big\{\, \mu^{\,f\mid \mathcal F} \in \mathcal R^{\mathcal F} \;:\; f \text{ is a $\mathcal T$-measurable selection of } F \,\big\},
  \]
  i.e., the set of all RCDs induced by
  $\mathcal T$-measurable selections of $F$ given $\mathcal F$.

First, we revisit the characterization of the regularity properties of distributions of correspondences in terms of nowhere equivalence.
\begin{theorem}[\cite{He2018-gj}]
Let $X$ be a fixed uncountable Polish space. Assume that $\mathcal F$ is countably generated.
Then, the condition that $\mathcal{T}$ is nowhere equivalent to $\mathcal{F}$ is necessary and sufficient for each of the following properties.

\begin{description}
  \item[\textbf{A1}] For any closed valued $\mathcal{F}$-measurable correspondence $F:T\rightrightarrows X$, $D_{F}^{\mathcal{T}}$ is convex.

  \item[\textbf{A2}] For any closed valued $\mathcal{F}$-measurable correspondence $F:T\rightrightarrows X$, $D_{F}^{\mathcal{T}}$ is closed.

  \item[\textbf{A3}] For any compact valued $\mathcal{F}$-measurable correspondence $F:T\rightrightarrows X$, $D_{F}^{\mathcal{T}}$ is compact.

  \item[\textbf{A4}] 
  Let $Y$ be a Polish space and let $G:T\times Y \rightrightarrows X$ be a closed-valued correspondence such that:
\begin{enumerate}
\item for every $y\in Y$, $G(\cdot,y)$ (denoted as $G_y$) is $\mathcal F$-measurable from $T$ to $X$;
\item for every $t\in T$, $G(t,\cdot)$ is upper semicontinuous from $Y$ to $X$;
\item there exists an $\mathcal F$-measurable, compact-valued correspondence $F:T\rightrightarrows X$ with $G(t,y)\subseteq F(t)$ for all $(t,y)\in T\times Y$.
\end{enumerate}
Then the correspondence $H:Y\rightrightarrows \mathcal M(X)$ defined by $H(y):=D^{\mathcal T}_{G_y}$ is upper semicontinuous.
  \item[\textbf{A5}] For any $\mathcal F$-measurable mapping $G:T\to\mathcal M(X)$, there exists a $\mathcal T$-measurable mapping $f:T\to X$ such that \begin{enumerate}
  \item for every Borel subset $B\subset X$,
\[
\lambda  f^{-1}(B)=\int_T G(t)(B)\,d\lambda(t);
\]
  \item $f(t)\in \operatorname{supp} G(t)$ for $\lambda$-almost all $t\in T$, where $\operatorname{supp} G(t)$ denotes the support of the probability measure $G(t)$ on $X$ (the smallest closed subset of $X$ with full $G(t)$-measure).
\end{enumerate}
\end{description}
\end{theorem}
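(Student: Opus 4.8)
The plan is to prove the equivalence in two halves: \emph{sufficiency} (nowhere equivalence implies A1--A5) and \emph{necessity} (each of A1--A5 implies nowhere equivalence), the latter by contraposition. Throughout I would pass to the conditional picture, exploiting the barycenter identity $\lambda f^{-1}(B)=\int_T \mu^{f\mid\mathcal F}(t,B)\,d\lambda(t)$ (immediate from the defining property of the RCD) to reduce assertions about $D_F^{\mathcal T}$ to assertions about the set $\mathcal R_F^{(T,\mathcal F)}$ of realizable regular conditional distributions. The engine is Theorem~\ref{the:nowhere}: nowhere equivalence is equivalent to conditional atomlessness and to the existence of an atomless independent supplement, which, exactly as in the proof of that theorem, supplies a $\mathcal T$-measurable $U:T\to[0,1]$ that is uniform and independent of $\mathcal F$.

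For sufficiency the technical heart is a realization lemma: under nowhere equivalence, every $\mathcal F$-measurable transition probability $\phi$ with $\phi(t,F(t))=1$ for a.e.\ $t$ is the regular conditional distribution of some $\mathcal T$-measurable selection of $F$. I would construct this selection as $f(t)=q(t,U(t))$, where $q:T\times[0,1]\to X$ is a jointly measurable quantile-type parametrization with $m\,q(t,\cdot)^{-1}=\phi(t,\cdot)$ and $q(t,u)\in F(t)$ for $m$-a.e.\ $u$; the countable generation of $\mathcal F$ is what lets me represent $\mathcal F$ as $\sigma(f_0)$ for a single generating map and thereby obtain such a measurable disintegration (via a Borel isomorphism of $X$ with a subset of $[0,1]$ and the inverse-distribution-function construction). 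Since $U$ is independent of $\mathcal F$ and uniform, the conditional law of $f$ given $\mathcal F$ equals $\phi$; this is, in effect, the relative-saturation property noted in the Remark. Granting the lemma, A5 is the special case $F(t)=\operatorname{supp}G(t)$ with $\phi=G$; A1 follows because the admissible $\phi$ form a convex set and the barycenter map is affine; A2 and A3 follow because for closed- (resp.\ compact-) valued $F$ the admissible $\phi$ form a closed (resp.\ compact) set while $\phi\mapsto\int_T\phi\,d\lambda$ is continuous onto $D_F^{\mathcal T}$; and A4 follows by combining the compactness underlying A3 with the upper-semicontinuity and closed-graph hypotheses on $G$ in the parameter $y$.

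For necessity I would argue contrapositively. If $\mathcal T$ is not nowhere equivalent to $\mathcal F$, there is $D\in\mathcal T$ with $\lambda(D)>0$ on which $\mathcal T$ and $\mathcal F$ coincide modulo null sets; because $\mathcal F$ is countably generated and $\lambda$ is atomless, $(D,\mathcal F^D,\lambda^D)$ is isomorphic to the Lebesgue interval, and every $\mathcal T$-measurable selection agrees a.e.\ on $D$ with an $\mathcal F$-measurable one, so its conditional distribution is Dirac there. This collapse of randomization reproduces the classical Lebesgue-interval pathologies. Fixing a Borel injection of the parameter into the uncountable space $X$, I would take a two-point-valued---hence closed- and compact-valued---correspondence on $D$ whose selections have distributions supported on the graphs of $\{0,1\}$-valued choice functions; these graph-supported measures form a class that is neither convex nor closed, over which the induced correspondences fail upper semicontinuity, and which cannot realize a non-Dirac purification target as required by A5. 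Extending $F$ to a fixed singleton off $D$, the same construction simultaneously defeats A1--A5, and here the uncountability of $X$ is used precisely to carry a continuum of distinct labels.

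The step I expect to be the main obstacle is the realization lemma in the sufficiency direction---producing, for an arbitrary $\mathcal F$-measurable transition probability concentrated on $F$, a genuine $\mathcal T$-measurable selection whose conditional law matches it exactly. This demands a measurable disintegration compatible with the $\mathcal F$-structure, and it is exactly here that the countable generation of $\mathcal F$ is indispensable, consistent with the paper's remark that both directions of the cited result rely on that hypothesis and that removing it is nontrivial.
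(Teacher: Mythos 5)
First, a point of orientation: the paper does not prove this statement---it is quoted from \cite{He2018-gj} without proof---so your proposal can only be measured against the analogous machinery the paper develops for Theorem~\ref{the:main} in Section~\ref{sec:Proof}. At that level your architecture is sound and close to the paper's: the reduction from $D_F^{\mathcal T}$ to kernels via the barycenter identity, the realization lemma built from an independent uniform $U$ (Theorem~\ref{the:nowhere}) composed with a jointly measurable randomization $q(t,u)$, and the contrapositive necessity argument on a set $D$ where $\mathcal T^D$ and $\mathcal F^D$ coincide modulo null sets, transferring the classical Lebesgue-interval counterexamples, are all the right ingredients; the realization step is exactly the paper's proof of NE $\Rightarrow$ B5, and the necessity transfer correctly isolates why uncountability of $X$ and the mod-null collapse of selections matter.

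Two genuine problems remain. First, your deduction of A2 fails as stated: you argue that the admissible kernels form a closed set and the barycenter map is continuous ``onto'' $D_F^{\mathcal T}$, but a continuous image of a closed set need not be closed, and for a merely closed-valued $F$ in a noncompact $X$ the admissible set is not compact, so no limit kernel can be extracted directly. The missing steps are precisely those in the paper's proof of NE $\Rightarrow$ B2: embed $X$ in a compact $\widehat X$, use compactness of $\mathcal R^{\mathcal F}(\widehat X)$ \cite{Balder1988-fb} to extract a convergent subnet, show the limit kernel concentrates on $X$ by comparing marginals with the weak limit $\mu\in\mathcal M(X)$, then use lower semicontinuity of the normal-integrand functional to force concentration on $F$, and only then invoke realization. (Your A4 sketch is similarly underpowered: ``compactness plus upper semicontinuity'' does not by itself put the cluster selection inside $G_{z_0}$; one needs a device like the paper's $\Gamma_n(t)=\overline{\bigcup_{m\ge n}G(t,z_m)}$.) Second, you misplace where countable generation is indispensable. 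Your realization lemma does not need it: the paper proves exactly this statement (B5) under nowhere equivalence alone, for arbitrary $\mathcal F$, via Kallenberg's randomization lemma \cite[Lemma~4.22]{Kallenberg1997-dl}---the inverse-distribution-function construction is jointly measurable for any $\mathcal F$-measurable kernel, with no need to represent $\mathcal F$ as $\sigma(f_0)$. Where countable generation is truly load-bearing is the necessity direction (your point-isomorphism of $(D,\mathcal F^D,\lambda^D)$ with the Lebesgue interval), which is consistent with the paper's own counterexample after the theorem: a saturated space with $\mathcal F=\mathcal T$ satisfies A1--A5 while nowhere equivalence fails, defeating necessity, not sufficiency. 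So your sufficiency half, once A2 is repaired, is actually valid in greater generality than you claim.
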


In this characterization, the assumption that $\mathcal{F}$ is countably generated cannot be dropped. Indeed, if $(T,\mathcal{T},\lambda)$ is a saturated probability space and we take $\mathcal{F}=\mathcal{T}$, then all of A1--A5 hold \cite[Theorem~3.6]{Keisler2009-ib}, whereas $\mathcal{T}$ is not nowhere equivalent to $\mathcal{F}$.
By contrast, the regularity properties of regular conditional distributions can be characterized by nowhere equivalence without assuming countable generation. This is a main result of this paper and corresponds to \cite[Theorem~2]{He2017-cr}. Note that the Polish space is assumed to be merely infinite.

\begin{theorem}\label{the:main}
Let $X$ be a fixed infinite Polish space. The following conditions, (NE) and (B1)-(B5), are equivalent.

  \begin{description}
    \item[NE] The $\sigma$-algebra $\mathcal T$ is nowhere equivalent to $\mathcal F$.
    \item[B1] For any sub-$\sigma$-algebra $\mathcal G\subseteq \mathcal F$ and any closed-valued $\mathcal F$-measurable $F:T\rightrightarrows X$,
          $\mathcal R_F^{(T,\mathcal G)}$ is \emph{convex}.
    \item[B2] For any sub-$\sigma$-algebra $\mathcal G\subseteq \mathcal F$ and any closed-valued $\mathcal F$-measurable $F:T\rightrightarrows X$,
          $\mathcal R_F^{(T,\mathcal G)}$ is \emph{weakly closed}.
    \item[B3] For any sub-$\sigma$-algebra $\mathcal G\subseteq \mathcal F$ and any compact-valued $\mathcal F$-measurable $F:T\rightrightarrows X$,
          $\mathcal R_F^{(T,\mathcal G)}$ is \emph{weakly compact}.
    \item[B4]
    Let $\mathcal G$ be a sub-$\sigma$-algebra of $\mathcal F$ and let $Z$ be a metric space and let $G:T\times Z \rightrightarrows X$ be a closed-valued correspondence such that:
    \begin{enumerate}
    \item for every $z\in Z$, $G(\cdot,z)$ (denoted as $G_z$) is $\mathcal F$-measurable from $T$ to $X$;
    \item for every $t\in T$, $G(t,\cdot)$ has a closed graph;
    \item there exists an $\mathcal F$-measurable, compact-valued correspondence $F:T\rightrightarrows X$ with $G(t,z)\subseteq F(t)$ for all $(t,z)\in T\times Z$.
    \end{enumerate}
    Then the correspondence $H:Z\rightrightarrows \mathcal R^\mathcal G(X)$ defined by $H(z):=\mathcal R^{(\mathcal T, \mathcal G)}_{G_z}$ has a closed graph.
  \item[B5] For any sub-$\sigma$-algebra $\mathcal G\subseteq \mathcal F$ and any $G\in \mathcal R^{\mathcal G}$, there exists a $\mathcal T$-measurable mapping $g:T\to X$ such that
\[
\mu^{g\mid\mathcal G}=G.
\]
  \end{description}
\end{theorem}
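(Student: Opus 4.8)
The plan is to prove the two outer implications $\mathrm{(NE)}\Rightarrow\mathrm{(B1)}$--$\mathrm{(B5)}$ and $\mathrm{(Bi)}\Rightarrow\mathrm{(NE)}$ through a single characterization of the realizable regular conditional distributions. Writing $\mathcal P_F^{\mathcal F}:=\{\,p\in\mathcal R^{\mathcal F}(X):p(t,\cdot)\in\mathcal M(F(t))\ \text{for a.e.\ }t\,\}$ for the $\mathcal F$-measurable Young measures carried by $F$, I would first establish the following master lemma: under $\mathrm{(NE)}$,
\[
\mathcal R_F^{(T,\mathcal G)}=\Big\{\,\mathbb E\big[p\mid\mathcal G\big]:p\in\mathcal P_F^{\mathcal F}\,\Big\}.
\]
The inclusion $\subseteq$ needs no structural hypothesis: for a $\mathcal T$-measurable selection $f$ the conditional distribution $\mu^{f\mid\mathcal F}$ is $\mathcal F$-measurable, it is carried by $F(t)$ for a.e.\ $t$ (because $\int_T\mu^{f\mid\mathcal F}(t,X\setminus F(t))\,d\lambda=\lambda(\{f\notin F\})=0$, using $\operatorname{Gr}(F)\in\mathcal F\otimes\mathcal B(X)$ and a monotone-class argument), and the tower property gives $\mathbb E[\mu^{f\mid\mathcal F}\mid\mathcal G]=\mu^{f\mid\mathcal G}$. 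For the inclusion $\supseteq$ I would invoke Theorem~\ref{the:nowhere}: since $\mathrm{(NE)}$ passes to every sub-$\sigma$-algebra $\mathcal G\subseteq\mathcal F$, the algebra $\mathcal F$ admits an atomless independent supplement, i.e.\ a $\mathcal T$-measurable $U$, uniform on $[0,1]$ and independent of $\mathcal F$. Given $p\in\mathcal P_F^{\mathcal F}$ I would choose a jointly measurable $\Psi:T\times[0,1]\to X$ that is $\mathcal F$-measurable in $t$, Borel in $s$, $F(t)$-valued, and satisfies $m\,\Psi(t,\cdot)^{-1}=p(t,\cdot)$, and set $g(t):=\Psi(t,U(t))$; then $g$ is a $\mathcal T$-measurable selection, and the freezing identity $\mathbb E[\mathbf 1_{\{\Psi(\cdot,U)\in B\}}\mid\mathcal F]=\int_0^1\mathbf 1_{\{\Psi(\cdot,s)\in B\}}\,dm(s)=p(\cdot,B)$ combined with the tower property yields $\mu^{g\mid\mathcal G}=\mathbb E[p\mid\mathcal G]$.

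Granting this characterization, $\mathrm{(NE)}\Rightarrow\mathrm{(B1)}$--$\mathrm{(B5)}$ reduces to transporting set-theoretic and topological properties across the affine map $\mathbb E[\,\cdot\mid\mathcal G]$. For $\mathrm{(B1)}$, $\mathcal P_F^{\mathcal F}$ is convex (a convex combination of measures carried by $F(t)$ is still carried by $F(t)$, with no convexity of the values needed), so its image is convex. For $\mathrm{(B5)}$ I would take $F\equiv X$, so that every $G\in\mathcal R^{\mathcal G}$ is admissible (it is $\mathcal F$-measurable and trivially carried by $X$) and is realized by the associated $g$. For $\mathrm{(B3)}$, when $F$ is compact-valued $\mathcal P_F^{\mathcal F}$ is weakly compact (the compact values force tightness), and $\mathbb E[\,\cdot\mid\mathcal G]$ is continuous from the weak topology of $\mathcal R^{\mathcal F}$ to that of $\mathcal R^{\mathcal G}$ (any $\mathcal G$-measurable Carath\'eodory integrand is also $\mathcal F$-measurable Carath\'eodory and is preserved by $\mathbb E[\,\cdot\mid\mathcal G]$), so the image is weakly compact. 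The closedness in $\mathrm{(B2)}$ and the closed-graph property in $\mathrm{(B4)}$ follow from the same continuity together with a limit extraction in $\mathcal P_F^{\mathcal F}$; in $\mathrm{(B4)}$ the domination by a compact-valued $\mathcal F$-measurable $F$ supplies the compactness that turns the closed graph of $z\mapsto\mathcal P_{G_z}^{\mathcal F}$ (inherited from the closed graph of $G(t,\cdot)$) into the closed graph of $H$.

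For the converses I would argue contrapositively, showing $\neg\mathrm{(NE)}\Rightarrow\neg\mathrm{(Bi)}$ for each $i$ with the choice $\mathcal G=\mathcal F$. If $\mathrm{(NE)}$ fails there is $D\in\mathcal T$ with $\lambda(D)>0$ on which $\mathcal T^{D}$ and $\mathcal F^{D}$ coincide modulo null sets; hence every $\mathcal T$-measurable selection agrees with an $\mathcal F$-measurable function on $D$ up to a null set, and its conditional distribution $\mu^{f\mid\mathcal F}(t,\cdot)=\delta_{f(t)}$ is a Dirac kernel there. Using two distinct points of the (merely infinite) space $X$ one builds a closed-valued $\mathcal F$-measurable $F$ for which the achievable kernels on $D$ are exactly the Dirac kernels along selections: their set is not convex (violating $\mathrm{(B1)}$), a non-degenerate constant kernel such as $\tfrac12\delta_{x_1}+\tfrac12\delta_{x_2}$ is unattainable (violating $\mathrm{(B5)}$), and a convergent sequence of distinct points yields failures of closedness and compactness (violating $\mathrm{(B2)}$ and $\mathrm{(B3)}$), with a parametrized variant handling $\mathrm{(B4)}$. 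Only countably many points of $X$ enter, which is why infiniteness rather than uncountability of $X$ suffices.

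The principal obstacle is the inclusion $\supseteq$ of the master lemma without any countable-generation assumption, specifically the construction of the parametrizing map $\Psi$. In the countably generated setting one disintegrates along a countable generator of $\mathcal F$ and selects fiberwise; here no such generator is available. The point is that $\Psi$ can be built using only the measurability of $t\mapsto p(t,\cdot)$ as a map into the Polish space $\mathcal M(X)$—whose Borel $\sigma$-algebra is countably generated—by a parametrized inverse-transform construction, so that $\mathcal F$ and $\mathcal G$ are never coordinatized; all of the conditioning is carried by the independent randomization $U$ through the freezing identity. This is precisely the mechanism by which regular conditional distributions evade the obstruction that blocks the distributional statements $\mathbf{A1}$--$\mathbf{A5}$, and it is where the argument departs from \cite{He2018-gj}. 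Secondary technical care is required in the non-compact closed-valued case of $\mathrm{(B2)}$, where weak compactness of $\mathcal P_F^{\mathcal F}$ is unavailable and the limit must instead be produced directly.
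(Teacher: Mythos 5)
Your sufficiency half is essentially the paper's own mechanism repackaged: the ``master lemma'' is exactly the paper's NE $\Rightarrow$ B5 construction (an atomless independent supplement via Theorem~\ref{the:nowhere}, a Kallenberg-type jointly measurable representation $\Psi$ of the kernel, randomization $g(t)=\Psi(t,U(t))$, and the freezing/tower identities), combined with the a-posteriori selection argument via the normal integrand $I_F$. The two spots you flag as ``secondary technical care'' are real but fillable, and the paper fills them: for B2 the escape-of-mass problem is handled by compactifying $X\subset\widehat X$, extracting a subnet limit in $\mathcal R^{\mathcal F}(\widehat X)$, and using that the limit's $X$-marginal must agree with that of the given limit $\nu\in\mathcal R^{\mathcal G}(X)$; for B4 one cannot argue with nets directly (the integrand $I_{G_{z_\alpha}}$ varies and $\mathcal R^{\mathcal G}$ is non-metrizable), so the paper proves upper semicontinuity along sequences $z_n\to z_0$, uses the countable construction $\Gamma_n(t)=\overline{\bigcup_{m\ge n}G(t,z_m)}$, and must pass to the completion $\mathcal F_\mu$ because the closure correspondence is only graph-measurable there. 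Your sketch ``closed graph of $z\mapsto\mathcal P^{\mathcal F}_{G_z}$ inherited from the closed graph of $G(t,\cdot)$'' hides precisely these steps, but the route is the same in substance.

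The necessity half, however, contains a genuine error. You claim that if NE fails with witness $D$ (so $\mathcal T^D=\mathcal F^D$ modulo null sets), then for every $\mathcal T$-measurable $f$ one has $\mu^{f\mid\mathcal F}(t,\cdot)=\delta_{f(t)}$ on $D$, and you build all five counterexamples on this. That claim is false unless $D$ is itself $\mathcal F$-measurable modulo null sets, because conditioning on $\mathcal F$ mixes $D$ with $T\setminus D$. Concretely, take $T=[0,1]\times\{0,1\}$, $\lambda=\mathrm{Leb}\otimes\bigl(\tfrac12\delta_0+\tfrac12\delta_1\bigr)$, $\mathcal T$ the (completed) product $\sigma$-algebra, and $\mathcal F=\{B\times\{0,1\}\}$. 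Then NE fails with witness $D=[0,1]\times\{1\}$ (every $\mathcal T$-measurable subset of $D$ lies in $\mathcal F^D$), yet for any $f$ one has $\mu^{f\mid\mathcal F}\bigl((x,i),\cdot\bigr)=\tfrac12\delta_{f(x,0)}+\tfrac12\delta_{f(x,1)}$, which is non-Dirac on $D$; in particular the constant kernel $\tfrac12\delta_{x_1}+\tfrac12\delta_{x_2}$, which you assert is unattainable, \emph{is} attained (by $f(x,i)=x_{i+1}$). So B1 and B5 do still fail in this example, but only for quantitative reasons (e.g.\ the weight $\tfrac13$ is unattainable), and your stated mechanism cannot detect this. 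This is exactly why the paper argues the necessity direction non-contrapositively and quantitatively: from B1/B5 it realizes, for every $n$, the kernel $H=\tfrac1n\sum_k\delta_{x_k}$ on the atomless part, deduces that the sets $E_k=\{f=x_k\}$ are independent of $\mathcal F^{T_1}$ with measure $1/n$, and invokes Lemma~7 of \cite{He2017-cr} to conclude NE; moreover, for B2--B4 it needs an extra step your plan lacks entirely, namely the extended Lyapunov convexity theorem \cite[Theorem~IV.17]{Castaing1977-pk} to produce $\mathcal F$-measurable selections showing $H$ lies in the weak closure of $\mathcal R_F^{(T,\mathcal F)}$ before closedness/compactness can be applied. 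As written, your converse direction does not go through and would need to be replaced by an argument of this quantitative type.
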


\begin{proof}
    See Section \ref{sec:Proof}.
\end{proof}

Using Theorem~\ref{the:main}, we obtain a saturation-type characterization of nowhere equivalence without countable generation. This is used in Section \ref{sec:Applications}.
\begin{theorem}\label{the:saturation}
Let $(T,\mathcal T,\lambda)$ be an atomless probability space and let $\mathcal F\subset\mathcal T$ be a sub-$\sigma$-algebra. The following conditions are equivalent:
\begin{enumerate}[]
  \item $\mathcal F$ is nowhere equivalent to $\mathcal T$ with respect to $\lambda$.
  \item For any measurable space $(S,\mathcal S)$, any Polish space $Y$, any $\mu \in \mathcal{M}(S \times Y)$, and any $\mathcal F$-measurable mapping $f:T\to S$ with \(\mu|_S = \lambda f^{-1}\),
  there exists a $\mathcal T$-measurable mapping $g:T\to Y$ such that
  \[
    \mu=\lambda (f,g)^{-1}.
  \]
\end{enumerate}
\end{theorem}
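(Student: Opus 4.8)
The plan is to prove the two implications separately, invoking Theorem~\ref{the:main} (specifically property B5) for the forward direction and Theorem~\ref{the:nowhere} for the reverse. The point of allowing $(S,\mathcal S)$ to be an \emph{arbitrary} measurable space—rather than a Polish space, as in the relative-saturation property of the preceding Remark—is precisely that it permits the choice $(S,\mathcal S)=(T,\mathcal F)$ with $f=\mathrm{id}$, and this extra flexibility is exactly what replaces the countable-generation hypothesis relied upon in \cite{He2017-cr}.

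For the direction ``nowhere equivalence $\Rightarrow$ lifting,'' I would start from a given $\mu\in\mathcal M(S\times Y)$ and an $\mathcal F$-measurable $f:T\to S$ with $\mu|_S=\lambda f^{-1}$, and disintegrate $\mu$ along the $S$-coordinate. Because $Y$ is Polish (hence standard Borel), the regular conditional distribution of the $Y$-coordinate given $\pi_S^{-1}(\mathcal S)$ exists even though $(S,\mathcal S)$ carries no topological structure; by the Doob--Dynkin lemma (applicable since $\mathcal M(Y)$ is itself Polish) it factors as an $\mathcal S$-measurable kernel $\nu:S\to\mathcal M(Y)$ satisfying $\mu(A\times B)=\int_A\nu_s(B)\,\mu|_S(ds)$. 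Setting $G:=\nu\circ f$ produces an element of $\mathcal R^{\mathcal F}(Y)$, to which I apply B5 (with $\mathcal G=\mathcal F$) to obtain a $\mathcal T$-measurable $g:T\to Y$ whose RCD given $\mathcal F$ equals $G$. A short computation, using $\{f\in A\}\in\mathcal F$ together with the defining property of the RCD and the change of variables $\lambda f^{-1}=\mu|_S$, shows that $\lambda(f,g)^{-1}$ and $\mu$ agree on every rectangle $A\times B$; since these form a generating $\pi$-system, $\lambda(f,g)^{-1}=\mu$ on $\mathcal S\otimes\mathcal B(Y)$.

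For the reverse direction I would build an atomless independent supplement directly. Apply the lifting property to the particular data $(S,\mathcal S)=(T,\mathcal F)$, $f=\mathrm{id}$, $Y=[0,1]$, and $\mu=\lambda|_{\mathcal F}\otimes m$ with $m$ Lebesgue measure; here $\mu|_S=\lambda|_{\mathcal F}=\lambda f^{-1}$, so the hypothesis is met and the lifting yields a $\mathcal T$-measurable $g:T\to[0,1]$ with $\lambda(\mathrm{id},g)^{-1}=\lambda|_{\mathcal F}\otimes m$. Reading off the rectangle identity $\lambda(A\cap\{g\in B\})=\lambda(A)\,m(B)$ for $A\in\mathcal F$ shows that $g$ is uniformly distributed and independent of $\mathcal F$; hence $\mathcal H:=\sigma(g)$ is an atomless $\sigma$-algebra independent of $\mathcal F$, i.e.\ an atomless independent supplement of $\mathcal F$ in $\mathcal T$. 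Theorem~\ref{the:nowhere} then delivers nowhere equivalence.

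The main obstacle is the forward direction, and in particular the disintegration over a non-Polish base: I must guarantee that the conditional kernel $\nu$ both exists and is genuinely $\mathcal S$-measurable when $(S,\mathcal S)$ has no topological structure, which is why Polishness is used only on the fiber $Y$ and why the Doob--Dynkin factorization is needed. A secondary technical point is reconciling B5, stated for a single fixed infinite Polish target $X$, with an arbitrary Polish $Y$ (possibly finite or countable); I would handle this by fixing a Borel isomorphism of $Y$ onto a Borel subset of $[0,1]$, applying B5 with target $[0,1]$, and using that the resulting RCD is concentrated on the image of $Y$ to conclude that $g$ takes values in $Y$ off a $\lambda$-null set.
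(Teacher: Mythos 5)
Your proposal is correct and follows essentially the same route as the paper: the forward direction disintegrates $\mu$ into an $\mathcal S$-measurable kernel (the paper cites the disintegration theorem directly where you route through the RCD plus Doob--Dynkin, but this yields the same kernel), composes with $f$ to get an $\mathcal F$-measurable transition probability, applies B5, and extends from rectangles by the $\pi$--$\lambda$ theorem, while the reverse direction uses exactly the paper's choice $(S,\mathcal S)=(T,\mathcal F)$, $f=\mathrm{id}_T$, $\mu=\lambda\otimes m$ to manufacture an atomless independent supplement and invoke Theorem~\ref{the:nowhere}. Your closing remark about reconciling B5's fixed infinite Polish $X$ with a possibly finite or countable $Y$ via a Borel embedding into $[0,1]$ is a legitimate technical point that the paper passes over silently, and your resolution of it is sound.
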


\begin{proof}
1 $\Rightarrow$ 2: Since $Y$ is Polish, from the disintegration theorem \cite[Corollary~A5]{Valadier1990-wx} (see also \cite[Theorem~3.4]{Kallenberg1997-dl}) there exists an $\mathcal S$-measurable transition probability $\kappa:S\times\mathcal B(Y)\to[0,1]$ such that
  \[
    \mu(E\times B)=\int_E \kappa(s,B)\,\mu|_S(ds)\qquad(E\in\mathcal S,\ B\in\mathcal B(Y)).
  \]
Define $K:T\times\mathcal B(Y)\to[0,1]$ by
  \[
    K(t,B):=\kappa\bigl(f(t),B\bigr).
  \]
  Then $K$ is $\mathcal F$-measurable transition probability.

Since the nowhere-equivalence condition is equivalent to B5 in Theorem~\ref{the:main}, there exists a $\mathcal T$-measurable $g:T\to Y$ such that
  \[
    \mu^{g|\mathcal F} = K.
  \]
For rectangles $E\times B$ with $E\in\mathcal S$ and $B\in\mathcal B(Y)$,
  \[
  \begin{aligned}
  \mu(E\times B)
  &= \int_E \kappa(s,B)\,\mu|_S(ds)\\
  &= \int_{f^{-1}(E)} \kappa\!\left(f(t),B\right) d\lambda \qquad\text{(change of variables using $\mu|_S=\lambda f^{-1}$)} \\
  &= \int_{f^{-1}(E)}\,K(t,B)\,d\lambda\\
  &= \lambda(f^{-1}(E) \cap g^{-1}(B)) \qquad\text{(since $\mu^{g|\mathcal F} = K$)}\\
  &= \lambda  (f,g)^{-1}(E \times B).
  \end{aligned}
  \]
  By the $\pi$–$\lambda$ theorem \cite[Theorem~3.2]{Billingsley1986-wb}, this extends to all of $\mathcal S\otimes\mathcal B(Y)$, so $\mu=\lambda(f,g)^{-1}$.

\noindent
2 $\Rightarrow$ 1: Apply the condition 2 with
\[
(S,\mathcal S)=(T,\mathcal F),\qquad Y=[0,1],\qquad \mu=\lambda\otimes m,\qquad f=\mathrm{id}_T,
\]
where $m$ denotes Lebesgue measure on $[0,1]$ and $\mathrm{id}_T$ the identity map on $T$. Then there exists a $\mathcal T$-measurable $g:T\to[0,1]$ such that
\[
\lambda\otimes m=\lambda(\mathrm{id}_T,g)^{-1}.
\]

Let $\mathcal H$ be the $\sigma$-algebra generated by $g$. For any $F\in\mathcal F$ and any Borel $B\subset[0,1]$,
\[
\lambda\bigl(F\cap g^{-1}(B)\bigr)
=\lambda\bigl((\mathrm{id}_T,g)^{-1}(F\times B)\bigr)
=(\lambda\otimes m)(F\times B)
=\lambda(F)\,m(B).
\]
Taking $F=T$ gives $\lambda\bigl(g^{-1}(B)\bigr)=m(B)$, so $\mathcal H$ is independent of $\mathcal F$ and $(T, \mathcal H, \lambda)$ is atomless. Hence, by Theorem~\ref{the:nowhere}, $\mathcal T$ is nowhere equivalent to $\mathcal F$ with respect to $\lambda$.

\end{proof}

\section{Applications to large games}
\label{sec:Applications}

%% The Appendices part is started with the command \appendix;
%% appendix sections are then done as normal sections
A large game with traits is a continuum-player game in which each player’s payoff depends on their own action and on the joint distribution over trait–action pairs.
Formally, the agent space is a complete, atomless probability space $(T,\mathcal T,\lambda)$. 
The common action space $A$ is a compact metric space. The trait space is an arbitrary measurable space $(S,\mathcal S)$. Fix a sub-$\sigma$-algebra $\mathcal F\subseteq\mathcal T$. 
A \emph{trait function} is an $\mathcal F$-measurable function $\alpha:T\to S$ (each player is assigned a trait from the trait space). Endow $\mathcal M(S\times A)$ with the coarsest topology making the functional $\rho\mapsto\int_{S\times A}\!\varphi(s,a)\,d\rho$ continuous for every bounded Carathéodory function $\varphi$ and with its induced Borel $\sigma$-algebra. 
Define $\mathcal V_{(A,S)}:=C_b\bigl(A\times \mathcal M(S\times A)\bigr)$, the space of bounded continuous functions on $A\times \mathcal M(S\times A)$, equipped with the sup-norm topology and the induced Borel $\sigma$-algebra. 
We refer to an $\mathcal F$-measurable function $v:T\to\mathcal V_{(A,S)}$ as a \emph{game function}; since $\mathcal V_{(A,S)}$ is generally nonseparable, we assume that $v$ is strongly $\mathcal F$-measurable.\footnote{Here “strongly $\mathcal F$-measurable” means that $v:T \to \mathcal V_{(A,S)}$ is, almost everywhere, the pointwise limit of $\mathcal F$-measurable countably-valued functions.}
For each $t\in T$, write $v_t:=v(t)\in\mathcal V_{(A,S)}$, so $v_t:A\times \mathcal M(S\times A)\to\mathbb R$ is bounded and continuous. We call the pair $G=(\alpha,v)$ an $\mathcal F$-\emph{measurable large game with traits}.

\begin{definition}
    A $\mathcal T$-measurable function $f:T\to A$ is a \emph{pure-strategy Nash equilibrium}
    for the $\mathcal F$-measurable large game with traits $G=(\alpha,v)$
    if, for $\lambda$-a.e.\ $t\in T$,
    \[
      v_t\big(f(t),\,\lambda(\alpha,f)^{-1}\big)\ \ge\
      v_t\big(a,\,\lambda(\alpha,f)^{-1}\big)\quad \text{for all } a\in A .
    \]
\end{definition}

\begin{remark}
    When $S$ is Polish, the weak topology on $\mathcal M(S \times A)$ is coarser than the topology introduced above. Consequently, the class of payoff functions considered here is broader than the class in the previous literature. In particular, the framework of large game with traits in \cite[Section 3]{He2017-cr} is a special case of the present one.
\end{remark}

In general, large games may not have a Nash equilibrium; this is demonstrated in \cite{Rath1995-so} by an example in which the agent space is the Lebesgue unit interval. The same is true for large games with traits: if $\mathcal T = \mathcal F$, there are some examples with no Nash equilibrium even when the trait space is compact metric; see \cite[Example~1]{Khan2013-pk} and \cite[Example~1]{Wu2022-as}.
The following theorem shows that the nowhere-equivalence condition between $\mathcal T$ and $\mathcal F$ is sufficient for the existence of pure-strategy Nash equilibria in large games with an arbitrary measurable trait space.
\begin{theorem}\label{the:Nash}
    Assume that the $\sigma$-algebra $\mathcal T$ is nowhere equivalent to the sub-$\sigma$-algebra $\mathcal F$. Then, every $\mathcal F$-measurable large game with traits $G=(\alpha,v)$ has a $\mathcal T$-measurable Nash equilibrium.
\end{theorem}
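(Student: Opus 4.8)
The plan is to reduce the equilibrium-existence problem to a fixed-point argument in the space of regular conditional distributions, using the regularity property \textbf{B4} (closed-graph preservation) and the selection property \textbf{B5} from Theorem~\ref{the:main}, both of which are available under the nowhere-equivalence hypothesis (NE). The strategy mirrors the classical Schmeidler/Khan--Sun approach, but carried out at the level of RCDs rather than distributions, so that the nonseparability of the trait space and of $\mathcal V_{(A,S)}$ can be accommodated. First I would work on the "distributional" or mixed-strategy relaxation: instead of pure strategies $f:T\to A$, consider transition probabilities $\phi\in\mathcal R^{\mathcal F}(A)$ (Young measures), which form a convex set that is compact in the weak topology since $A$ is compact metric. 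Given such a $\phi$, the induced aggregate is the push-forward $\tau(\phi):=\bigl(\lambda\otimes\phi\bigr)(\alpha,\mathrm{id}_A)^{-1}\in\mathcal M(S\times A)$, obtained by integrating $\phi(t,\cdot)$ against $\lambda$ after pairing with the trait $\alpha(t)$; one checks this map is continuous from $\mathcal R^{\mathcal F}(A)$ (weak) to $\mathcal M(S\times A)$ (the Carathéodory topology).

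Next I would define the best-response correspondence. For a fixed aggregate $\rho\in\mathcal M(S\times A)$, set
\[
  B(t,\rho):=\operatorname*{arg\,max}_{a\in A} v_t(a,\rho)\subseteq A .
\]
By continuity of $v_t$ and compactness of $A$, each $B(t,\rho)$ is nonempty and compact; as a function of $\rho$ it has a closed graph (Berge/maximum theorem), and in $t$ it is $\mathcal F$-measurable (using strong $\mathcal F$-measurability of $v$, so that the Carathéodory structure needed for measurable selections is present). Composing with $\tau$, I obtain for each $\phi$ a correspondence $t\rightrightarrows B\bigl(t,\tau(\phi)\bigr)$, and I consider the set of all RCDs of $\mathcal T$-measurable selections of this correspondence given $\mathcal F$, namely $\mathcal R^{(\mathcal T,\mathcal F)}_{B(\cdot,\tau(\phi))}$. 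The overall fixed-point map $\Psi$ sends $\phi$ to this subset of $\mathcal R^{\mathcal F}(A)$. By \textbf{B3}, $\mathcal R^{(\mathcal T,\mathcal F)}_{B(\cdot,\tau(\phi))}$ is weakly compact, and by \textbf{B1} it is convex, so $\Psi$ has nonempty convex compact values; by \textbf{B4}, together with continuity of $\tau$ and the closed graph of $B$ in $\rho$, the map $\Psi$ has a closed graph. A Fan--Glicksberg fixed-point theorem on the convex compact set $\mathcal R^{\mathcal F}(A)$ then yields $\phi^\ast\in\Psi(\phi^\ast)$, i.e. a \emph{distributional} equilibrium whose RCD is supported on best responses to its own induced aggregate.

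Finally I would purify: the fixed point $\phi^\ast$ is realized as $\mu^{g\mid\mathcal F}$ for some $\mathcal T$-measurable $g:T\to A$ via \textbf{B5}, and since $\phi^\ast(t,\cdot)$ is supported on $B\bigl(t,\tau(\phi^\ast)\bigr)$ for a.e.\ $t$, the selection $g$ satisfies $g(t)\in B\bigl(t,\tau(\phi^\ast)\bigr)$ a.e.; moreover $\tau(\phi^\ast)=\lambda(\alpha,g)^{-1}$ because $g$ reproduces $\phi^\ast$ as an RCD and $\alpha$ is $\mathcal F$-measurable, so that the aggregate is preserved. This gives exactly $v_t\bigl(g(t),\lambda(\alpha,g)^{-1}\bigr)\ge v_t\bigl(a,\lambda(\alpha,g)^{-1}\bigr)$ for all $a$, a.e.\ $t$, which is the desired pure-strategy Nash equilibrium. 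I expect the \textbf{main obstacle} to be the bookkeeping around the nonseparable payoff space: verifying that strong $\mathcal F$-measurability of $v$ is enough to guarantee $\mathcal F$-measurability of $t\mapsto B(t,\rho)$ and the joint Carathéodory-type regularity needed to invoke \textbf{B4}, and carefully confirming that the consistency identity $\tau(\phi^\ast)=\lambda(\alpha,g)^{-1}$ survives the passage from the mixed fixed point to its pure realization. The convexity, compactness, and closed-graph inputs are then routine consequences of Theorem~\ref{the:main} once this measurability scaffolding is in place.
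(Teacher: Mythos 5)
Your plan is structurally sensible, and much of it is indeed sound: the continuity of $\phi\mapsto\tau(\phi)$ follows at once from the definition of the weak topology on $\mathcal R^{\mathcal F}(A)$ (the test function $c(t,a):=\varphi(\alpha(t),a)$ is bounded $\mathcal F$-measurable Carath\'eodory whenever $\varphi$ is bounded Carath\'eodory on $S\times A$); strong $\mathcal F$-measurability of $v$ does make $(t,a)\mapsto v_t(a,\rho)$ an $\mathcal F$-measurable Carath\'eodory function, so $B(\cdot,\rho)$ is $\mathcal F$-measurable and compact-valued; and your purification step is actually \emph{easier} than you describe, since $\phi^\ast\in\Psi(\phi^\ast)=\mathcal R^{(\mathcal T,\mathcal F)}_{B(\cdot,\tau(\phi^\ast))}$ means, by the very definition of that set, that $\phi^\ast=\mu^{h\mid\mathcal F}$ for some $\mathcal T$-measurable selection $h$ of the best-response correspondence, after which $\lambda(\alpha,h)^{-1}=\tau(\phi^\ast)$ follows from the RCD identity because $\alpha^{-1}(E)\in\mathcal F$; no separate appeal to B5 or to supports is needed there. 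However, there is a genuine gap at the closed-graph step, and it is not the ``measurability bookkeeping'' you flag as the main obstacle. You invoke \textbf{B4} with parameter space $Z=\mathcal M(S\times A)$ (or, composing with $\tau$, $Z=\mathcal R^{\mathcal F}(A)$), but B4 in Theorem~\ref{the:main} is stated only for a \emph{metric} space $Z$, and this restriction is essential rather than cosmetic: the proof of NE $\Rightarrow$ B4 works along a sequence $z_n\to z_0$ and uses the countable unions $\Gamma_n(t)=\overline{\bigcup_{m\ge n}G(t,z_m)}$, an argument that does not survive the passage from sequences to nets. When $S$ is an arbitrary measurable space, $\mathcal M(S\times A)$ with its bounded-Carath\'eodory topology and $\mathcal R^{\mathcal F}(A)$ are in general not metrizable (nor are their compact subsets), so you can neither apply B4 nor reduce to sequences. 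The paper names exactly this as the central difficulty of Theorem~\ref{the:Nash}.

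This is why the paper's proof takes a different route instead of a fixed point in $\mathcal R^{\mathcal F}(A)$: it adopts Mas-Colell's distributional-game formulation. It fixes a separable closed set $V$ essentially containing the range of $v$, works in the nonempty convex compact set $P_\tau=\{\mu\in\mathcal M(S\times V\times A):\mu|_{S\times V}=\tau\}$ with $\tau=\lambda(\alpha,v)^{-1}$ (compactness via the homeomorphism with transition probabilities from $S\times V$ to $A$), defines $\Gamma(\mu):=\{\mu'\in P_\tau:\mu'(B_\mu)=1\}$ where $B_\mu$ encodes the best-response set, and proves that $\Gamma$ has a closed graph \emph{directly for nets} using the parametrized Portmanteau estimate of Lemma~\ref{lem:Port} together with a countable dense subset of $A$ — thereby sidestepping any metrizability requirement on the aggregate space. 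Kakutani--Fan--Glicksberg then gives a fixed distribution, and nowhere equivalence enters only at the end, through Theorem~\ref{the:saturation} (itself derived from B5), to lift $\mu$ to a $\mathcal T$-measurable $f$ with $\mu=\lambda\bigl((\alpha,v),f\bigr)^{-1}$. In short: your B1/B3/B5 inputs are used correctly, but the closed-graph claim for $\Psi$ has no proof with the tools available, and repairing it essentially forces either the paper's distributional detour or an independent net-version of B4 that Theorem~\ref{the:main} does not supply.
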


The main difficulty in the proof of Theorem~\ref{the:Nash} is that $\mathcal{R}^{\mathcal{F}}(A)$ is, in general, not metrizable. We therefore adopt the distributional game approach in \cite{Mas-Colell1984-xg}. To that end, we prepare a lemma, which is a slight extension of Portmanteau theorem.

\begin{lemma}\label{lem:Port}
Let \(X\) be a Polish space and \(Z\) a topological space. For any \(A\subset X\times Z\) and \(w\in Z\), write
\[
A_w:=\{x\in X:\ (x,w)\in A\}.
\]
Let \((\mu_\alpha)_\alpha\) be a net of probability measures on \(X\) with \(\mu_\alpha\Rightarrow\mu\) (weak convergence), and let \((z_\alpha)_\alpha\) be a net in \(Z\) with \(z_\alpha\to z\).
\begin{enumerate}
\item[\textnormal{(i)}] If \(G\subset X\times Z\) is open, then
\[
\liminf_{\alpha}\mu_\alpha\!\left(G_{z_\alpha}\right)\ \ge\ \mu\!\left(G_{z}\right).
\]
\item[\textnormal{(ii)}] If \(F\subset X\times Z\) is closed, then
\[
\limsup_{\alpha}\mu_\alpha\!\left(F_{z_\alpha}\right)\ \le\ \mu\!\left(F_{z}\right).
\]
\end{enumerate}
\end{lemma}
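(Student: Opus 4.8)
The plan is to reduce part (ii) to part (i) by complementation, and then to prove (i) by combining inner regularity of $\mu$ with a tube-lemma argument that removes the dependence of the slice on the net index.

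First I would observe that (i) and (ii) are equivalent. Given a closed $F\subset X\times Z$, set $G:=(X\times Z)\setminus F$, which is open, and note that $F_w=X\setminus G_w$ for every $w\in Z$. Since the $\mu_\alpha$ and $\mu$ are probability measures, $\mu_\alpha(F_{z_\alpha})=1-\mu_\alpha(G_{z_\alpha})$ and $\mu(F_z)=1-\mu(G_z)$, so
\[
\limsup_\alpha\mu_\alpha(F_{z_\alpha})=1-\liminf_\alpha\mu_\alpha(G_{z_\alpha})\le 1-\mu(G_z)=\mu(F_z),
\]
which is precisely (ii) once (i) is known. Hence it suffices to prove (i). I would also record that the slice $G_z$ is open in $X$, being the preimage of the open set $G$ under the continuous map $x\mapsto(x,z)$.

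The core of (i) is the following. Fix $\varepsilon>0$. Since $X$ is Polish, $\mu$ is Radon, so by inner regularity applied to the open set $G_z$ there is a compact $K\subseteq G_z$ with $\mu(K)>\mu(G_z)-\varepsilon$. For each $x\in K$ we have $(x,z)\in G$, and since $G$ is open there are open sets $U_x\ni x$ in $X$ and $W_x\ni z$ in $Z$ with $U_x\times W_x\subseteq G$. By compactness of $K$, finitely many $U_{x_1},\dots,U_{x_n}$ cover $K$; set $U:=\bigcup_{i=1}^n U_{x_i}$ (open in $X$, containing $K$) and $W:=\bigcap_{i=1}^n W_{x_i}$ (open in $Z$, containing $z$). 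The key point is that $U\subseteq G_w$ for every $w\in W$: if $x\in U$ then $x\in U_{x_i}$ for some $i$ and $w\in W\subseteq W_{x_i}$, whence $(x,w)\in U_{x_i}\times W_{x_i}\subseteq G$. This is the step that decouples the estimate from $z_\alpha$, and it is where I expect the (mild) main difficulty to lie: it is essentially a tube-lemma argument, and compactness of $K$ is what makes the finite intersection $W$ a genuine neighborhood of $z$.

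Finally, since $z_\alpha\to z$ and $W$ is an open neighborhood of $z$, the net is eventually in $W$: there is an index $\alpha_0$ with $z_\alpha\in W$ for all $\alpha\ge\alpha_0$, and for such $\alpha$ we have $U\subseteq G_{z_\alpha}$, hence $\mu_\alpha(G_{z_\alpha})\ge\mu_\alpha(U)$. Taking $\liminf$ and applying the classical Portmanteau inequality for the \emph{fixed} open set $U$—which holds for nets, since $\mathbf 1_U$ is approximated from below by the bounded continuous functions $f_k(x)=\min\{1,k\,d(x,X\setminus U)\}$ and weak convergence passes to the limit—I obtain
\[
\liminf_\alpha\mu_\alpha(G_{z_\alpha})\ \ge\ \liminf_\alpha\mu_\alpha(U)\ \ge\ \mu(U)\ \ge\ \mu(K)\ >\ \mu(G_z)-\varepsilon.
\]
Since $\varepsilon>0$ is arbitrary, $\liminf_\alpha\mu_\alpha(G_{z_\alpha})\ge\mu(G_z)$, which proves (i) and therefore the lemma.
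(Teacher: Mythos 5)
Your proposal is correct and follows essentially the same route as the paper's proof: reduce (ii) to (i) by complementation, use inner regularity of $\mu$ on the open slice $G_z$ to extract a compact $K$, run the tube-lemma argument to produce a fixed open set $U\supseteq K$ with $U\subseteq G_{z_\alpha}$ eventually, and finish with the classical Portmanteau inequality for $U$. Your extra remark verifying that Portmanteau holds for nets (via approximation of $\mathbf 1_U$ from below by bounded continuous functions) is a sound detail the paper leaves implicit.
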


\begin{proof}
It suffices to prove (i), since (ii) follows by applying (i) to complements.
Fix \(\varepsilon>0\). Because \(X\) is Polish, there exists a compact set \(K\subset G_z\) with \(\mu(K)\ge \mu(G_z)-\varepsilon.\)
For each \(x\in K\), we have \((x,z)\in G\). Since \(G\) is open, there exist an open neighborhood \(U_x\subset X\) of \(x\) and an open neighborhood \(V_x\subset Z\) of \(z\) such that
\[
U_x\times V_x\subset G .
\]
Since \(\{U_x\}_{x\in K}\) is an open cover of the compact set \(K\), choose \(x_1,\dots,x_m\in K\) with
\[
K\subset O:=\bigcup_{i=1}^m U_{x_i}.
\]
Let
\[
V:=\bigcap_{i=1}^m V_{x_i}.
\]
Then \(V\) is an open neighborhood of \(z\) and
\(
O\times V\subset G .
\)

Because \(z_\alpha\to z\), there exists \(\alpha_0\) such that \(z_\alpha\in V\) for all \(\alpha\ge \alpha_0\). In particular, we have \(O\times\{z_\alpha\}\subset G\), hence
\[
O\subset G_{z_\alpha}\qquad(\alpha\ge \alpha_0).
\]
By monotonicity of measures,
\[
\mu_\alpha(G_{z_\alpha})\ \ge\ \mu_\alpha(O)\qquad(\alpha\ge \alpha_0).
\]
Therefore
\[
\liminf_{\alpha}\mu_\alpha(G_{z_\alpha})\ \ge\ \liminf_{\alpha}\mu_\alpha(O).
\]
Since \(O\) is open, by Portmanteau theorem,
\[
\liminf_{\alpha}\mu_\alpha(O)\ \ge\ \mu(O).
\]
Combining the inequalities, we have
\[
\liminf_{\alpha}\mu_\alpha(G_{z_\alpha})
\ \ge\ \liminf_{\alpha}\mu_\alpha(O)
\ \ge\ \mu(O)
\ \ge\ \mu(K)
\ \ge\ \mu(G_z)-\varepsilon .
\]
Letting \(\varepsilon\downarrow0\) yields the conclusion.
\end{proof}

\begin{proof}[Proof of Theorem~\ref{the:Nash}]
Consider an $\mathcal F$-measurable large game with traits $G=(\alpha,v)$. Since $v:T\to\mathcal V(A,S)$
is strongly $\mathcal F$-measurable: it is essentially separably valued.\footnote{That is, there exists a null set $N \in \mathcal F$ such that $v (T \setminus N)$ is separable.} Fix a separable closed subset
$V \subset \mathcal V(A,S)$ with $v(t)\in V$ for $\lambda$-a.e.\ $t$.
Without loss of generality, we may assume  $v(t)\in V$ for all $t$.
Equip $\mathcal M(S\times V\times A)$ with the coarsest topology that makes the functional
\[
\mu \ \longmapsto\ \int g\,d\mu
\]
continuous for every bounded function $g:S\times V\times A\to\mathbb R$ such that: (i) for each $a\in A$, the function $(s,u)\mapsto g(s,u,a)$ is $\mathcal S\otimes\mathcal V$-measurable on $S\times V$;
(ii) for each $(s,u)\in S\times V$, the function $a\mapsto g(s,u,a)$ is continuous on $A$.

Consider the projection
\[
\mathcal M(S\times V\times A)\longrightarrow \mathcal M(S\times A),\qquad 
\mu\ \longmapsto\ \mu|_{S\times A}.
\]
For any bounded Carath\'eodory $h:S\times A\to\mathbb R$, set $g(s,u,a):=h(s,a)$. Then
\[
\int_{S\times V\times A} g\,d\mu=\int_{S\times A} h\,d(\mu|_{S\times A}),
\]
and hence $\mu\ \longmapsto\ \mu|_{S\times A}$ is continuous. Similarly the projection
\(
\mathcal M(S\times V\times A)\rightarrow \mathcal M(V\times A),\  
\mu\ \mapsto\ \mu|_{V\times A},
\)
is also continuous.

Let $\tau:=\lambda G^{-1}\in\mathcal M(S\times V)$. Consider
\[
P_\tau:=\{\mu\in\mathcal M(S\times V\times A):\ \mu|_{S\times V}=\tau\},
\]
endowed with the subspace topology inherited from $\mathcal M(S \times V \times A).$
By the disintegration theorem \cite[Corollary~A5]{Valadier1990-wx} (see also \cite[Theorem~3.4]{Kallenberg1997-dl}), for each $\mu \in P_\tau$ there exists a $\tau$-a.e. unique
transition probability $\kappa$ from $S \times V$ to $A$ such that
\[
  \mu(E \times B)=\int_E \kappa_{(s,u)}(B) d\tau (s,u), \ E \in \mathcal S \otimes \mathcal B(V), B \in \mathcal B(A).
\]
Conversely, any transition probability
$\kappa$ from $S\times V$ to $A$ induces an element $\mu\in P_\tau$
via this construction. Moreover, for every bounded function $g:S \times V \times A \to \mathbf{R}$ satisfying the above (i) and (ii),
\[
  \int_{S\times V\times A}\phi \, d\mu
  \;=\;
  \int_{S\times V}\Big(\int_A \phi(s,u,a)\,\kappa_{(s,u)}(da)\Big)\,\tau(d(s,u)).
\]
Therefore $P_\tau$ is topologically homeomorphic to the space $\mathcal R$ of transition probabilities from $S\times V$ to $A$
under the identification.
The space $\mathcal R$ is a compact subset of a Hausdorff locally convex topological vector space \cite[Theorem~2.3 (a)]{Balder1988-fb}. Hence, $\mathcal P_\tau$ can also be regarded as a compact subset of such a space. Moreover, it is clear that $\mathcal P_\tau$ is nonempty and convex.

For \(\rho\in\mathcal M(S\times A)\), define
\[
C_\rho:=\bigl\{(u,a)\in V\times A:\ u(a,\rho)\ge u(x,\rho)\ \ \forall x\in A\bigr\},
\qquad
B_\mu:=S\times C_{\mu|_{S\times A}} \in \mathcal S \otimes \mathcal B(V) \otimes \mathcal B(A) .
\]
Fix a countable dense subset \(D\subset A\) and set
\[
C_{\rho,x}:=\bigl\{(u,a): u(a,\rho)\ge u(x,\rho)\bigr\}\quad(x\in D).
\]
By continuity of each \(u \in V\), we have
\[
C_\rho=\bigcap_{x\in D}C_{\rho,x}.
\]
Define the correspondence \(\Gamma: P_\tau\rightrightarrows P_\tau\) by
\[
\Gamma(\mu):=\bigl\{\mu'\in P_\tau:\ \mu'(B_\mu)=1\bigr\}\qquad(\mu\in P_\tau).
\]
It is clear that $\Gamma$ is nonempty, convex-valued. 

We prove that $\Gamma$ has a closed graph.
Let \((\mu_\alpha,\mu_\alpha')\to(\mu,\mu')\) be a net with \(\mu_\alpha'\in\Gamma(\mu_\alpha)\).
By continuity of marginals,
\[
\mu_\alpha'|_{V\times A}\ \Rightarrow\ \mu'|_{V\times A},
\qquad
\mu_\alpha|_{S\times A}\ \to\ \mu|_{S\times A}.
\]
Fix \(x\in D\) and consider
\[
F_x:=\bigl\{\,(u,a,\rho)\in (V\times A)\times\mathcal M(S\times A):\ u(x,\rho)-u(a,\rho)\le 0\,\bigr\}.
\]
This is closed as the function $(u,a,\rho) \mapsto u(x,\rho)-u(a,\rho)$ is continuous. For each \(\rho\), it holds that \(C_{\rho,x} = \{(u,a) \in V \times A: (u,a,\rho) \in F_x\}\).
Thus, applying Lemma~\ref{lem:Port} (ii) to \(X=V\times A\), \(Z=\mathcal M(S\times A)\), \(F=F_x\), we obtain
\[
\limsup_\alpha\,\mu_\alpha'|_{V\times A}\bigl(C_{\mu_\alpha|_{S\times A},x}\bigr)\ \le\ \mu'|_{V\times A}\bigl(C_{\mu|_{S\times A},x}\bigr).
\]
Since \(\mu_\alpha'\in\Gamma(\mu_\alpha)\), the left-hand side equals \(1\); hence
\[
\mu'|_{V\times A}\bigl(C_{\mu|_{S\times A},x}\bigr)=1\qquad(\forall x\in D).
\]
Because \(D\) is countable,
\[
\mu'|_{V\times A}\bigl(C_{\mu|_{S\times A}}\bigr)
=\mu'|_{V\times A}\Bigl(\bigcap_{x\in D}C_{\mu|_{S\times A},x}\Bigr)
=1.
\]
    Hence \(\mu'(B_\mu)=1\), and \(\Gamma\) has a closed graph.

The set \( P_\tau\) is regarded as a nonempty, convex, compact subset in a Hausdorff locally convex topological vector space; \(\Gamma\) is nonempty, convex-valued, with closed graph.
Hence, by the Kakutani--Fan--Glicksberg fixed point theorem \cite[Corollary~17.55]{Aliprantis2006-hf} there exists \(\mu\in P_\tau\) with
\(
\mu\in\Gamma(\mu).
\)
Since \(\mu\in P_\tau\), its \(S\times V\)-marginal is \(\tau\), the distribution of the game \(G\).
By Theorem~\ref{the:saturation}, there exists a \(\mathcal T\)-measurable function \(f:T\to A\) such that $\mu = \lambda(G,f)^{-1}$. In particular, the \(S\times A\)-marginal of \(\mu\) is the distribution of \((\alpha,f)\).
Since \(\mu(B_\mu)=1\) and \( \mu =\lambda  (G, f)^{-1}\), the function \(f\) satisfies \(\lambda\text{-a.e.\ }t,\)
\[
v(t)\bigl(f(t),\,\mu|_{S\times A}\bigr)\ \ge\ v(t)\bigl(x,\,\mu|_{S\times A}\bigr)
\qquad
(\ \forall x\in A),
\]
hence \(f\) is a pure-strategy Nash equilibrium of the game \(G\).

\end{proof}

Take the trait space to be the agent space: $(S,\mathcal S)=(T,\mathcal F)$ and $\alpha$ the identity $\mathrm{id}_T$. Then, we obtain the existence of a pure strategy equilibrium in semi-anonymous settings:
  \begin{corollary}\label{coro:semi}
      Assume that the $\sigma$-algebra $\mathcal T$ is nowhere equivalent to the sub-$\sigma$-algebra $\mathcal F$. Then, for any $\mathcal F$-measurable game function $v:T\to \mathcal V_{(A,T)}$, there exists an
    $\mathcal T$-measurable $f:T\to A$ such that for $\lambda$-a.e.\ $t\in T$,
    \[
      v_t\!\big(f(t),\,\lambda(\mathrm{id}_T,f)^{-1}\big)\ \ge\
      v_t\!\big(a,\,\lambda(\mathrm{id}_T,f)^{-1}\big)\quad \text{for all } a\in A .
    \]
  \end{corollary}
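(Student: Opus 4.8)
The plan is to obtain the corollary as an immediate specialization of Theorem~\ref{the:Nash}, with the trait structure chosen to encode each agent's own identity, observed only through $\mathcal F$. Concretely, I would set the trait space to be $(S,\mathcal S):=(T,\mathcal F)$, take the trait function $\alpha:=\mathrm{id}_T$, and retain the given game function $v:T\to\mathcal V_{(A,T)}$. Under this choice the payoff space $\mathcal V_{(A,S)}=C_b\bigl(A\times\mathcal M(S\times A)\bigr)$ becomes $\mathcal V_{(A,T)}=C_b\bigl(A\times\mathcal M(T\times A)\bigr)$, so the datum $(\alpha,v)$ is exactly an $\mathcal F$-measurable large game with traits in the sense required by the theorem, provided I verify the measurability requirement on $\alpha$.

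The one point that genuinely needs checking is that $\alpha=\mathrm{id}_T$ is a legitimate trait function, i.e.\ that it is $\mathcal F$-measurable as a map from $(T,\mathcal T)$ into $(T,\mathcal F)$. This holds because for every $E\in\mathcal F$ one has $\alpha^{-1}(E)=E\in\mathcal F\subseteq\mathcal T$: the preimage of each $\mathcal F$-set under the identity is that set itself, which lies in $\mathcal F$. It is worth stressing that this works precisely because the codomain carries the coarser $\sigma$-algebra $\mathcal F$ rather than $\mathcal T$; the identity viewed into $(T,\mathcal T)$ would only be $\mathcal T$-measurable, not $\mathcal F$-measurable. The strong $\mathcal F$-measurability of $v$ is assumed in the hypothesis, so all three ingredients of an $\mathcal F$-measurable large game with traits are in place.

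With these choices I would invoke Theorem~\ref{the:Nash}, whose nowhere-equivalence hypothesis is exactly the standing assumption of the corollary, to produce a $\mathcal T$-measurable $f:T\to A$ that is a pure-strategy Nash equilibrium of the game $(\alpha,v)=(\mathrm{id}_T,v)$. Substituting $\alpha=\mathrm{id}_T$ into the equilibrium inequality of the theorem, the joint distribution $\lambda(\alpha,f)^{-1}$ becomes $\lambda(\mathrm{id}_T,f)^{-1}\in\mathcal M(T\times A)$, and the defining condition reads, for $\lambda$-a.e.\ $t$,
\[
v_t\!\big(f(t),\,\lambda(\mathrm{id}_T,f)^{-1}\big)\ \ge\ v_t\!\big(a,\,\lambda(\mathrm{id}_T,f)^{-1}\big)\qquad(\forall a\in A),
\]
which is verbatim the assertion of the corollary.

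Since the deduction is a direct specialization, there is no substantive analytic obstacle: the only non-automatic step is the measurability check above, and even that is immediate once one observes that the trait space is equipped with $\mathcal F$. Thus the \emph{hard part} here is conceptual rather than technical—recognizing that identifying each agent with herself, but only up to $\mathcal F$-resolution, places the semi-anonymous model inside the general traits framework, so that no separate argument is needed.
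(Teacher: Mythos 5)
Your proposal is correct and is exactly the paper's own argument: the paper derives the corollary by taking $(S,\mathcal S)=(T,\mathcal F)$, $\alpha=\mathrm{id}_T$, and applying Theorem~\ref{the:Nash}, just as you do. Your explicit check that $\mathrm{id}_T$ is $\mathcal F$-measurable into $(T,\mathcal F)$ is a small but worthwhile addition that the paper leaves implicit.
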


\begin{remark}
The nonexistence problem is first resolved in \cite{Khan1999-gy} by working with a hyperfinite Loeb agent space.
It has been shown in \cite{Keisler2009-ib} that the saturation property is necessary and sufficient for the existence of pure-strategy Nash equilibria in large games (without traits) when the action space is any fixed uncountable compact metric space.
Assuming the trait space is Polish, this saturation characterization extends to large games with traits~\cite{Khan2013-pk}. 
If the trait space is compact metric and $\mathcal{F}$ is countably generated, equilibrium existence is also characterized by nowhere equivalence for large games with traits and a fixed uncountable action space; see \cite[Theorem~2]{He2017-cr} and \cite[Theorem~2]{He2018-gj}. If the action space is finite, the game function may be $\mathcal T$-measurable in this nowhere-equivalence characterization; see \cite{Wu2022-as}.
\end{remark}

\begin{remark}
The converse of Theorem~\ref{the:Nash} does not hold in general. Indeed, when the trait space $S$ is a singleton, large games with traits reduce to the usual large games (without traits). Hence, if $(T,\mathcal{T},\lambda)$ is a saturated probability space and $\mathcal{F}=\mathcal{T}$, then every $\mathcal{F}$-measurable game with traits admits a $\mathcal{T}$-measurable Nash equilibrium \cite[Theorem~4.6]{Keisler2009-ib}. Nevertheless, $\mathcal{T}$ is not nowhere equivalent to $\mathcal{F}$.
By contrast, it remains open whether the converse of Corollary \ref{coro:semi} holds.
\end{remark}

\section{Proof of Theorem~\ref{the:main}}
\label{sec:Proof}
\subsection{Proof of the sufficiency part of Theorem~\ref{the:main}}
First, we present the following lemma. Its proof is standard, but we include it for completeness, as we will use the lemma repeatedly below.
\begin{lemma}\label{lem:integration}
    Let $X$ be a Polish space and $(T,\mathcal T,\lambda)$ be a probability space and $f:(T,\mathcal T)\to(X,\mathcal B(X))$ be measurable. Let $\mathcal F\subset\mathcal T$ be a sub-$\sigma$-algebra and $\mu^{\,f\mid\mathcal F}$ be a regular conditional distribution of $f$ given $\mathcal F$.
If $\phi:T\times X\to\mathbb R$ is $\mathcal F\otimes\mathcal B(X)$-measurable and either nonnegative or bounded, then
\[
\int_T\!\left(\int_X \phi(t,x)\,\mu^{\,f\mid\mathcal F}(t, dx)\right)d\lambda(t)
= \int_T \phi\bigl(t,f(t)\bigr)d\lambda(t).
\]
\end{lemma}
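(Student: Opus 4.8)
The plan is to run the standard machine (a $\pi$-$\lambda$ / monotone-convergence argument): I reduce the asserted identity to the case of measurable rectangles, where it becomes exactly the defining property of the regular conditional distribution, and then bootstrap from indicators up to nonnegative and bounded functions. The key point that makes everything work is that $\phi$ is assumed $\mathcal F\otimes\mathcal B(X)$-measurable, not merely $\mathcal T\otimes\mathcal B(X)$-measurable.

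First I would check the identity for $\phi=\mathbf 1_{A\times B}$ with $A\in\mathcal F$ and $B\in\mathcal B(X)$. For such $\phi$ the inner integral equals $\mathbf 1_A(t)\,\mu^{\,f\mid\mathcal F}(t,B)$, so the left-hand side is $\int_A \mu^{\,f\mid\mathcal F}(t,B)\,d\lambda(t)$. By property (ii) in the definition of the RCD we have $\mu^{\,f\mid\mathcal F}(\cdot,B)=\mathbb E[\mathbf 1_{\{f\in B\}}\mid\mathcal F]$, and since $A\in\mathcal F$, the defining property of conditional expectation gives $\int_A \mathbb E[\mathbf 1_{\{f\in B\}}\mid\mathcal F]\,d\lambda=\lambda\bigl(A\cap\{f\in B\}\bigr)$, which is precisely the right-hand side $\int_T \mathbf 1_A(t)\mathbf 1_B\bigl(f(t)\bigr)\,d\lambda(t)$. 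The restriction $A\in\mathcal F$ (rather than $A\in\mathcal T$) is exactly what is needed to invoke the defining property of $\mathbb E[\cdot\mid\mathcal F]$, and is the reason the hypothesis insists on $\mathcal F\otimes\mathcal B(X)$-measurability.

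Next I would let $\mathcal D$ be the collection of $C\in\mathcal F\otimes\mathcal B(X)$ for which the identity holds with $\phi=\mathbf 1_C$. The measurable rectangles $A\times B$ ($A\in\mathcal F$, $B\in\mathcal B(X)$) form a $\pi$-system generating $\mathcal F\otimes\mathcal B(X)$, and $\mathcal D$ is a $\lambda$-system: it contains $T\times X$ (both sides equal $1$); it is closed under proper differences of nested sets by linearity, all quantities being finite since the measures are probabilities; and it is closed under increasing unions by the monotone convergence theorem applied to both sides. The $\pi$-$\lambda$ theorem then yields $\mathcal D=\mathcal F\otimes\mathcal B(X)$. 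Along the way one also verifies that $t\mapsto\int_X\phi(t,x)\,\mu^{\,f\mid\mathcal F}(t,dx)$ is $\mathcal F$-measurable, so that the outer integral is well defined at each stage: this holds for rectangles (where it is $\mathbf 1_A(t)\,\mu^{\,f\mid\mathcal F}(t,B)$) and propagates through the same limiting steps.

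Finally I would pass from indicators to general $\phi$ in the usual two steps: linearity extends the identity to nonnegative $\mathcal F\otimes\mathcal B(X)$-measurable simple functions, monotone convergence (approximating from below by simple functions) extends it to every nonnegative $\mathcal F\otimes\mathcal B(X)$-measurable $\phi$, and for bounded $\phi$ I write $\phi=\phi^+-\phi^-$ with both parts nonnegative, bounded, and hence integrable, apply the nonnegative case to each, and subtract. I do not expect any genuine obstacle here; the only points requiring care are the $\mathcal F$-measurability of the inner integral at each stage and the bookkeeping ensuring all integrals are finite before subtracting in the bounded case, both of which are routine.
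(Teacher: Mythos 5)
Your proof is correct and follows essentially the same route as the paper's: verification on rectangles $A\times B$ with $A\in\mathcal F$ via the defining property of the RCD, a $\pi$--$\lambda$ (Dynkin) argument to cover all of $\mathcal F\otimes\mathcal B(X)$, then monotone convergence for nonnegative $\phi$ and the decomposition $\phi=\phi^+-\phi^-$ for bounded $\phi$. Your explicit check of the $\mathcal F$-measurability of the inner integral is a small point the paper leaves implicit, but it does not change the argument.
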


\begin{proof}
It suffices to prove the case $\phi=\mathbf 1_E$ with $E\in\mathcal F\otimes\mathcal B(X)$. Indeed, if $\phi\ge 0$, approximate $\phi$ by an increasing sequence of $\mathcal F\otimes\mathcal B(X)$-measurable simple functions $\phi_n\uparrow\phi$ and apply the monotone convergence theorem; if $\phi$ is bounded and real-valued, write $\phi=\phi^+-\phi^-$ and apply the nonnegative case to $\phi^\pm$, then take the difference.

Thus we prove the claim for indicator functions.
Define
\[
\mathcal C
:= \Bigl\{E\in \mathcal F\otimes\mathcal B(X):
\int_T\!\int_X \mathbf 1_E(t,x)\,\mu^{\,f\mid\mathcal F}(t, dx)\,d\lambda(t)
= \int_T \mathbf 1_E\bigl(t,f(t)\bigr)d\lambda(t)\Bigr\}.
\]
Then, $\mathcal C$ clearly satisfies:
(1) $T\times X\in\mathcal C$;
(2) if $A,B\in\mathcal C$ with $A\subset B$, then $B\setminus A\in\mathcal C$;
(3) if $\{E_n\}\subset\mathcal C$ are pairwise disjoint, then $\bigcup_n E_n\in\mathcal C$.
Hence $\mathcal C$ is a Dynkin family.

Consider the class of rectangles
\[
\mathcal P:=\{A\times B:\ A\in\mathcal F,\ B\in\mathcal B(X)\}.
\]
Then, $\mathcal P$ is a $\pi$-system with $\sigma(\mathcal P)=\mathcal F\otimes\mathcal B(X)$. For each rectangle $E=A\times B$, by the definition of the regular conditional distribution,
\[
\int_T\!\int_X \mathbf 1_{A\times B}(t,x)\,\mu^{f\mid\mathcal F}(t, dx)\,\mathrm d\lambda(t)
= \int_A \,\mu^{\,f\mid\mathcal F}_t(B)d\lambda(t)
= \lambda (\{f \in B\} \cap A  )
= \int_T \mathbf 1_{A\times B}\bigl(t,f(t)\bigr)d\lambda(t),
\]
so $A\times B\in\mathcal C$, i.e., $\mathcal P\subset\mathcal C$. By the $\pi$–$\lambda$ theorem \cite[Theorem~3.2]{Billingsley1986-wb}, $\sigma(\mathcal P)\subset\mathcal C$, hence $\mathcal F\otimes\mathcal B(X)\subset\mathcal C$.
This completes the proof.
\end{proof}

We first prove that NE $\Rightarrow$ B5.
\begin{proof}[Proof of NE $\Rightarrow$ B5]
    We assume nowhere equivalence of $\mathcal T$ to $\mathcal F$. Since $\mathcal T$ is also nowhere equivalent to $\mathcal G$,
there exists an atomless independent supplement
  $\mathcal H$ of $\mathcal G.$
Because $(T,\mathcal H,\lambda)$ is atomless, there exists
an $\mathcal H$-measurable $U:T\to[0,1]$ such that
\[
\lambda U^{-1}=m,
\]
where $m$ is the Lebesgue measure on $[0,1]$; see \cite[Lemma~2.1 (ii)]{Keisler2009-ib}.

Since $X$ is Polish and $G$ is a $\mathcal G$-measurable transition probability to $X$, by Lemma 4.22 in \cite{Kallenberg1997-dl} there exists a
$(\mathcal G\otimes\mathcal B([0,1]))$-measurable map
\[
f:T\times[0,1]\to X
\]
such that for every $t\in T$, the pushforward of $f(t,U(\cdot))$ satisfies:
\begin{equation} \label{eq:kernel}
\lambda f(t,U(\cdot))^{-1}=G(t,\cdot).
\end{equation}
Define
\[
g(t):=f\bigl(t,U(t)\bigr)\qquad(t\in T).
\]
Because $f$ is $\mathcal G\otimes\mathcal B([0,1])$-measurable and $U$ is $\mathcal H$-measurable,
$g$ is measurable with respect to $\sigma(\mathcal G\cup\mathcal H)\subset\mathcal T$; hence $g$ is
$\mathcal T$-measurable.

Consider the product measure $\lambda\otimes\lambda$ on $\mathcal G\otimes\mathcal H$.
By the independence of $\mathcal G$ and $\mathcal H$, the pushforward of $\lambda$ under the diagonal map
$\Delta:T\to T\times T$, $\Delta(t)=(t,t)$, satisfies
\[
\lambda\Delta^{-1}=\lambda\otimes\lambda\qquad\text{on }\mathcal G\otimes\mathcal H.
\]
Fix $E\in\mathcal G$ and $B\in\mathcal B(X)$, and set
\[
S:=\{(t_1,t_2): f(t_1,U(t_2))\in B\}.
\]
Then
\[
\lambda\bigl(E\cap g^{-1}(B)\bigr)
= \bigl(\lambda\Delta^{-1}\bigr)\bigl((E\times T)\cap S\bigr)
= (\lambda\otimes\lambda)\bigl((E\times T)\cap S\bigr).
\]
By Fubini's theorem,
\begin{align*}
(\lambda\otimes\lambda)\bigl((E\times T)\cap S\bigr)
&= \int_E \lambda\!\left(\{t_2: f(t,U(t_2))\in B\}\right)\,d\lambda(t)\\
&= \int_E \lambda  f\bigl(t,U(\cdot)\bigr) ^{-1}(B)\,d\lambda(t)\\
&= \int_E G(t,B)\, d\lambda(t),
\end{align*}
where the last equality uses \eqref{eq:kernel}. Hence, we have 
\[\lambda\bigl(E\cap g^{-1}(B)\bigr) = \int_E G(t,B)\, d\lambda(t).
\]
By the definition of regular conditional distributions,
this identity for all $E\in\mathcal G$ and $B\in\mathcal B(X)$ implies $\mu^{g\mid\mathcal G}=G$.

\end{proof}

\begin{proof}[Proof of NE $\Rightarrow$ B1]
    % Since we have already derived B5 from nowhere equivalence, it now suffices to derive B1 from B5.

Since we have already proved NE $\Rightarrow$ B5, it now suffices to prove B5 $\Rightarrow$ B1.
Take two $\mathcal T$-measurable selections $f_1,f_2$ of $F$ and $\alpha\in(0,1).$ We prove that $\alpha\,\mu^{f_1\mid\mathcal G}
 +(1-\alpha)\,\mu^{f_2\mid\mathcal G} \in \mathcal{R}_F^{(\mathcal T, \mathcal G)}.$
 
First, define the $\mathcal F$-measurable transition probability
\[
H_{\mathcal F}:=\alpha\,\mu^{f_1\mid\mathcal F}
+(1-\alpha)\,\mu^{f_2\mid\mathcal F}.
\]
By B5 there exists a $\mathcal T$-measurable $f:T\to X$ with
\[
\mu^{f\mid\mathcal F}=H_{\mathcal F}. \tag{$\ast$}
\]

Consider the $\mathcal F\otimes\mathcal B(X)$-measurable normal integrand\footnote{For any probability space $(T, \mathcal T, \lambda)$ and any Polish space $X$, a function $f:T\times X \to \mathbf{R}\cup\{+\infty\} $ is \emph{normal integrand} if (i) $f$ is $\mathcal T \otimes \mathcal B(X)$-measurable; (ii) for each $t \in T,$ the function $x \mapsto f(t,x)$ is lower semicontinuous.}
\[
I_F(t,x):=\begin{cases}
0,& (t,x)\in\mathrm{Gr}(F),\\[2pt]
+\infty,& (t,x)\notin\mathrm{Gr}(F).
\end{cases}
\]
Since $f_i(t)\in F(t)$ a.e., we have
\[
\int_T I_F\!\bigl(t,f_i(t)\bigr)\,d\lambda(t)=0\qquad (i=1,2).
\]
By Lemma~\ref{lem:integration}, we compute
\[
\begin{aligned}
\int_T I_F\!\bigl(t,f(t)\bigr)\,d\lambda(t)
&= \int_T \!\int_X I_F(t,x)\,\mu^{f\mid\mathcal F}(t,dx) d\lambda(t) \\
&= \int_T \!\int_X I_F(t,x)\,H_{\mathcal F}(t,dx) d\lambda(t) \\
&= \alpha \int_T \!\int_X I_F(t,x)\,\mu^{f_1\mid\mathcal F}(t,dx) d\lambda(t)
+(1-\alpha)\int_T \!\int_X I_F(t,x)\,\mu^{f_2\mid\mathcal F}(t,dx) d\lambda(t)\\
&= \alpha \int_T I_F\!\bigl(t,f_1(t)\bigr)\,d\lambda(t)
  +(1-\alpha)\int_T I_F\!\bigl(t,f_2(t)\bigr)\,d\lambda(t)\\
&= 0.
\end{aligned}
\]
Since $I_F\ge 0$, it follows that
\[
 I_F\!\bigl(t,f(t)\bigr)=0 \text{ a.s.}
\]
Thus $f(t)\in F(t)$ a.e.; i.e., $f$ is a $\mathcal T$-measurable selection of $F$.

For every $G\in\mathcal G$ and $B\in\mathcal B(X)$, since $G \in \mathcal F$, we have
\begin{align*}
\lambda(\{f\in B\}\cap G)
&= \int_G \mu^{f\mid\mathcal F}(t,B)\,d\lambda(t) \\[2mm]
&= \int_G \Bigl(\alpha\,\mu^{f_1\mid\mathcal F}(t,B)
               +(1-\alpha)\,\mu^{f_2\mid\mathcal F}(t,B)\Bigr)\,d\lambda(t) \\[1mm]
&= \alpha \int_G \mu^{f_1\mid\mathcal F}(t,B)\,d\lambda(t)
  +(1-\alpha)\int_G \mu^{f_2\mid\mathcal F}(t,B)\,d\lambda(t) \\[1mm]
&= \alpha\,\lambda(\{f_1\in B\}\cap G)
  +(1-\alpha)\,\lambda(\{f_2\in B\}\cap G) \\[1mm]
&= \alpha \int_G \mu^{f_1\mid\mathcal G}(t,B)\,d\lambda(t)
  +(1-\alpha)\int_G \mu^{f_2\mid\mathcal G}(t,B)\,d\lambda(t) \\[1mm]
&= \int_G \Bigl(\alpha\,\mu^{f_1\mid\mathcal G}(t,B)
               +(1-\alpha)\,\mu^{f_2\mid\mathcal G}(t,B)\Bigr)\,d\lambda(t).
\end{align*}
Since this holds for every $G\in\mathcal G$ and $B\in\mathcal B(X)$, by the defining property of
regular conditional distributions we conclude
\[
\alpha\,\mu^{f_1\mid\mathcal G}
 +(1-\alpha)\,\mu^{f_2\mid\mathcal G} = \mu^{f\mid\mathcal G} \in \mathcal{R}_F^{(\mathcal T, \mathcal G)}.
\]

\end{proof}

\begin{proof}[Proof of NE $\Rightarrow$ B2]
Again, it suffices to prove B5 $\Rightarrow$ B2.
Consider a net $(\nu_\alpha)_\alpha$ in \(
R^{(T,\mathcal G)}_F
\)
and assume $\nu_\alpha\to \nu \in \mathcal R^\mathcal G$ in the weak topology. By definition of $R^{(T,\mathcal G)}_F$, for each $\alpha$ there exists a $\mathcal T$-measurable selection $f_\alpha:T\to X$ such that
\[
\nu_\alpha=\mu^{f_\alpha\mid\mathcal G}.
\]
For the same $f_\alpha$, let
\[
\rho_\alpha:=\mu^{f_\alpha\mid\mathcal F}.
\]

Embed $X$ as a Borel subset of a compact metric space $\widehat X$ (see \cite[Theorem~1.8]{Kallenberg1997-dl}) and henceforth identify $X\subset \widehat X$. Whenever we regard $\rho_\alpha$ as a transition probability to $\widehat X$, we write $\widehat\rho_\alpha$:
\[
\widehat\rho_\alpha(t,A):=\rho_\alpha\bigl(t,A\cap X\bigr),\qquad t \in T, A\in\mathcal B(\widehat X).
\]
Then $(\widehat\rho_\alpha)_\alpha$ is a net in $R^{\mathcal F}(\widehat X)$. Since $\widehat X$ is compact metric, the space $R^{\mathcal F}(\widehat X)$ endowed with the weak topology is compact \cite[Theorem~2.3 (a)]{Balder1988-fb}. Therefore $(\widehat\rho_\alpha)_\alpha$ admits a weakly convergent subnet:
\[
\widehat\rho_\beta \ \Longrightarrow\ \widehat\rho\quad\text{in }R^{\mathcal F}(\widehat X).
\]

We next prove that for $\lambda$-a.e.\ $t\in T$, the probability measure $\widehat\rho(t,\cdot)$ concentrates on $X$; that is,
\[
\widehat\rho\bigl(t,X)=1\quad(\lambda\text{-a.e.\ }t).
\]
For a transition probability $\kappa$ from $T$ to $\widehat X$ and $A\in\mathcal B(X)$, define the \emph{marginal} $\kappa^X$ \emph{on} $X$ by
\[
\kappa^{X}(A):=\int_T \kappa(t,A)\,d\lambda(t).
\]
We note that for every bounded continuous function $\varphi:X \to \mathbf{R}$, it holds
\begin{equation}\label{eq:marginal}
    \int_X \varphi\, d\kappa^{X}
=\int_T\!\int_X \varphi(x)\,\kappa(t,dx)\,d\lambda(t).
\end{equation}

For each $A\in\mathcal B(X)$,
\[
(\widehat\rho_\beta)^{X}(A)
=\int_T \widehat\rho_\beta(t,A)\,d\lambda(t)
=\int_T \rho_\beta(t,A)\,d\lambda(t)
=\int_T \mu^{f_\beta \mid\mathcal F}(t,A) \,d\lambda
=\lambda (\{f_\beta\in A\}),
\]
hence $(\widehat\rho_\beta)^{X}=\lambda  f_\beta^{-1}$.
Similarly,
\[
\nu_\beta^{X}(A)=\int_T \nu_\beta(t,A)\,d\lambda(t)= \int_T \mu^{f_\beta \mid\mathcal G}(t,A) \,d\lambda = 
\lambda (\{f_\beta\in A\})\,
\]
so $\nu_\beta^{X}=\lambda  f_\beta^{-1}$.

From $\widehat\rho_\beta\Longrightarrow \widehat\rho$ and the identity \eqref{eq:marginal}, for any bounded continuous function $\varphi:X \to \mathbf{R}$,
\[
\int_X \varphi\, d(\widehat\rho_\beta)^{X}
=\int_T\!\int_X \varphi\, \widehat\rho_\beta(t,dx)\,d\lambda
\;\longrightarrow\;
\int_T\!\int_X \varphi\, \widehat\rho(t,dx)\,d\lambda
=\int_X \varphi\, d(\widehat\rho)^{X}.
\]
Thus $(\widehat\rho_\beta)^{X}\Rightarrow (\widehat\rho)^{X}$ in $\mathcal M(X)$. Likewise, $\nu_\beta \Rightarrow \nu$ implies $\nu_\beta^{X}\Rightarrow \nu^{X}$.
By the uniqueness of weak limits, $(\widehat\rho)^{X}=\nu^{X}$. In particular,
\[
\int_T \widehat\rho(t,X)\,d\lambda(t)
=(\widehat\rho)^{X}(X)= \nu^X(X) = 1,
\]
whence $\widehat\rho\bigl(t, X\bigr)=1$ for $\lambda$-a.e.\ $t$.

Next, regard $\widehat\rho$ as a transition probability into $X$ and denote it by $\rho$; that is,
\[
\rho(t,A):=\widehat\rho(t,A),\qquad A\in\mathcal B(X).
\]
Here, \(\widehat\rho_\beta \Rightarrow \widehat\rho\) means that, when we view
\(\rho_\beta\) and \(\rho\) as transition probabilities to \(\widehat X\), the net
\((\rho_\beta)_\beta\) converges weakly to \(\rho\) in \(\mathcal R^{\mathcal F}(\widehat X)\).
Hence, it follows from Theorem~2.1 in \cite{Balder1988-fb} that
\[
\rho_\beta \ \Longrightarrow\ \rho \quad\text{in } \mathcal R^{\mathcal F}(X).
\]
By B5 there exists a $\mathcal T$-measurable map $f:T\to X$ with
\[
\rho=\mu^{f\mid\mathcal F}.
\]
Therefore $\mu^{f_\beta\mid\mathcal F}\Rightarrow \mu^{f\mid\mathcal F}$ in $\mathcal R^{\mathcal F}(X)$.

Consider the $\mathcal F\otimes\mathcal B(X)$-measurable normal integrand
\[
I_F(t,x):=\begin{cases}
0,& (t,x)\in \operatorname{Gr}(F),\\[2mm]
+\infty,& (t,x)\notin \operatorname{Gr}(F).
\end{cases}
\]
For each $\beta$, Since $f_\beta(t)\in F(t)$ a.e., we have
\[
\int_T I_F\!\bigl(t,f_\beta(t)\bigr)\,d\lambda(t)=0.
\]
The functional
\[
\Phi(\nu):=\int_T \int_X I_F(t,x)\,\nu(t,dx)\,d\lambda(t),
\qquad \nu\in R^{\mathcal F},
\]
is lower semicontinuous for the weak topology of $\mathcal R^\mathcal F$ \cite[Theorem~2.2 (a)]{Balder1988-fb}. Hence, by Lemma~\ref{lem:integration},
\[
0 \le \Phi(\mu^{f\mid\mathcal F})
\;\le\; \liminf_{\beta}\,\Phi(\mu^{f_\beta\mid\mathcal F}) \;= \; \liminf_{\beta}\, \int_T I_F\!\bigl(t,f_\beta(t)\bigr)\,d\lambda(t)\; = \; 0,
\]
so $\Phi(\mu^{g\mid\mathcal F})=0$. Again, by Lemma~\ref{lem:integration}, this forces
\[
I_F(t,f(t))=0 \quad \text{a.s.}, \quad \text{i.e., } f(t)\in F(t)\ \text{a.s.}
\]
Thus $f$ is a $\mathcal T$-measurable selection of $F$.

Let $\phi : T \times X \to \mathbb{R}$ be a bounded $\mathcal G$-Carath\'eodory function. Since $\phi$ is also a $\mathcal F$-Carath\'eodory function, for each $\beta$,
\begin{equation}\label{eq:caratheodory_B2}
\begin{aligned}
\int_T\!\int_X \phi(t,x)\,\mu^{f_\beta\mid\mathcal G}(t,dx)\,d\lambda(t)
&= \int_T \phi(t,f_\beta(t))\,d\lambda \\
&= \int_T\!\int_X \phi(t,x)\,\mu^{f_\beta\mid\mathcal F}(t,dx)\,d\lambda(t).
\end{aligned}
\end{equation}
Since $\mu^{f_\beta\mid\mathcal F}\Rightarrow \mu^{f\mid\mathcal F}$, the right-hand side of \eqref{eq:caratheodory_B2} converges to
\[
\int_T\!\int_X \phi(t,x)\,\mu^{f\mid\mathcal F}(t,dx)\,d\lambda(t).
\]
This limit can be written as:
\begin{align*}
    \int_T\!\int_X \phi\,d\mu^{f\mid\mathcal F}\,d\lambda
&= \int_T \phi(t,f(t))\,d\lambda\\
&= \int_T\!\int_X \phi\,d\mu^{f\mid\mathcal G}\,d\lambda.
\end{align*}
Therefore,
\[
\int_T\!\int_X \phi\,d\mu^{f_\beta\mid\mathcal G}\,d\lambda
\;\longrightarrow\;
\int_T\!\int_X \phi\,d\mu^{f\mid\mathcal G}\,d\lambda,
\]
for every bounded $\mathcal G$-Carath\'eodory $\phi$, so $\mu^{f_\beta\mid\mathcal G} \Rightarrow \mu^{f|\mathcal G}$ in $\mathcal R^\mathcal G(X).$ 
Hence, $\nu = \mu^{f|\mathcal G} \in \mathcal R^{(T,\mathcal G)}_F.$
\end{proof}

\begin{proof}[Proof of NE $\Rightarrow$ B3]
    % B5 implies B3 (cleanup with nets indexed by $\alpha$, subnet by $\beta$)

We prove that B5 \(\Rightarrow\) B3.
Take a net \((\mu_\alpha)_\alpha\) in \(\mathcal R_F^{(T,\mathcal G)}\).
To obtain B3, it is enough to show that \((\mu_\alpha)_\alpha\) admits a convergent subnet whose limit still belongs to \(\mathcal R_F^{(T,\mathcal G)}\).
By the definition of \(\mathcal R_F^{(T,\mathcal G)}\), for each \(\alpha\) there exists a \(\mathcal T\)-measurable selection
\(f_\alpha\) of $F$ such that
\[
\mu_\alpha=\mu^{f_\alpha\mid\mathcal G}.
\]

Since \(X\) is a Polish space, there exists a Borel isomorphic embedding of \(X\) into a compact metric space \(\widehat X\) (see \cite[Theorem~1.8]{Kallenberg1997-dl}) and, from now on, we identify \(X\) with a Borel subset of \(\widehat X\).
Define
\[
  \widehat F:T\rightrightarrows \widehat X,\qquad \widehat F(t):=F(t)\subset\widehat X .
\]
Since \(F\) is compact-valued in \(X\), each \(\widehat F(t)\) is closed in \(\widehat X\).
Moreover, the graph is measurable: \(\operatorname{Gr}(\widehat F)\in \mathcal F\otimes \mathcal B(\widehat X)\).

For each index \(\alpha\) and each \(B\in\mathcal B(\widehat X)\), define 
\[
  \widehat\mu_\alpha(t,B)\;:=\;\mu_\alpha\bigl(t, B\cap X\bigr).
\]
Then \((\widehat\mu_\alpha)_\alpha\) is a net in \(\mathcal R^{\mathcal G}(\widehat X)\).
Since \(\mathcal R^{\mathcal G}(\widehat X)\) is compact in the weak topology \cite[Theorem~2.3 (a)]{Balder1988-fb}, there exists a subnet
\(\widehat\mu_\beta \Rightarrow \widehat\mu\) for some \(\widehat\mu\in\mathcal R^{\mathcal G}(\widehat X)\).

We show that for \(\lambda\)-a.e.\ \(t\), the probability measure \(\widehat\mu(t, \cdot)\) concentrates on \(\widehat F(t)\).
Because \(\widehat F\) is closed-valued and measurable, the function
\[
  I_{\widehat F}(t,x):=\begin{cases}
    0, & (t,x)\in \operatorname{Gr}(\widehat F),\\[2pt]
    +\infty, & \text{otherwise}
  \end{cases}
\]
is a \(\mathcal F \otimes \mathcal B (\widehat X)\)-measurable normal integrand.
Define the functional on transition probabilities to \(\widehat X\) by
\[
  \Phi(\nu)\;:=\;\int_T\!\int_{\widehat X} I_{\widehat F}(t,x)\,\nu(t,dx)\,d\lambda(t),
  \qquad \nu\in \mathcal R^{\mathcal F}(\widehat X).
\]
Then \(\Phi\) is weakly lower semicontinuous on \(\mathcal R^{\mathcal G}(\widehat X)\); see \cite[Theorem~2.2 (a)]{Balder1988-fb}.

For each \(\beta\), since \(f_\beta(t)\in F(t)\) a.e.\ and \(\widehat\mu_\beta(t, X)=1\) for all \(t\), by Lemma~\ref{lem:integration} we have
\[
\begin{aligned}
\Phi(\widehat\mu_\beta)
&= \int_T\!\int_{\widehat X} I_{\widehat F}(t,x)\,\widehat\mu_\beta(t,dx)\,d\lambda(t)
 = \int_T\!\int_{X} I_{\widehat F}(t,x)\,\mu^{f_\beta\mid\mathcal G}(t,dx)\,d\lambda(t) \\
&= \int_T I_{\widehat F}\bigl(t,f_\beta(t)\bigr)\,d\lambda(t)
 = 0.
\end{aligned}
\]
By lower semicontinuity,
\[
  0\;\le\; \Phi(\widehat\mu)\;\le\;\liminf_\beta \Phi(\widehat\mu_\beta)\;=\;0,
\]
so \(\Phi(\widehat\mu)=0\).
Therefore 
\[
\int_{\widehat X} I_{\widehat F}(t,x)\,\widehat\mu(t,dx) = 0 
\quad\text{for }\lambda\text{-a.e.\ }t\in T.
\]
Since \(I_{\widehat F}(t,\cdot)\ge 0\) and \(I_{\widehat F}(t, \cdot)>0\) precisely on \(\widehat X\setminus \widehat F(t)\), it follows that
\[
  \widehat\mu\bigl( t, \widehat X\setminus \widehat F(t)\bigr)=0
  \quad\text{for }\lambda\text{-a.e.\ }t\in T.
\]
Thus the probability measure \(\widehat\mu(t, \cdot)\) concentrates on \(\widehat F(t)\) for \(\lambda\)-a.e.\ \(t\).

Define \(\mu\in\mathcal R^{\mathcal F}(X)\) as
\[
  \mu(t,A):=\widehat\mu(t,A)\qquad\bigl( t \in T, A\in\mathcal B(X)\bigr).
\]
Since \(\widehat\mu_\beta\Rightarrow \widehat\mu\) in \(\mathcal R^{\mathcal F}(\widehat X)\), Theorem~2.1 in \cite{Balder1988-fb} yields
\[
  \mu_\beta \Rightarrow\ \mu \ \text{in } \mathcal R^{\mathcal F}(X).
\]
By B5, there exists a \(\mathcal T\)-measurable function \(f:T\to X\) such that
\(
  \mu=\mu^{f\mid\mathcal F}.
\)
Therefore, 
\[
\mu^{f_\beta\mid\mathcal F}\Rightarrow \mu^{f\mid\mathcal F} \text{ in } R^{\mathcal F}(X).
\]
The same argument as in the proof of NE $\Rightarrow$ B2 yields $f$ is a $\mathcal T$-measurable selection of $F$ and
\[
\mu^{f_\beta\mid\mathcal G}\ \Rightarrow\ \mu^{f\mid\mathcal G}\  \in R_F^{(\mathcal T, \mathcal G)}.
\]
\end{proof}

\begin{proof}[Proof of NE $\Rightarrow$ B4] We prove B3 $\Rightarrow$ B4. Assume (B3).
Then, $H(z)=R^{(T,\mathcal G)}_{G_z}$ is weakly compact for every $z\in Z$; in particular, each $H(z)$ is closed in the weak topology. Therefore, to deduce that $H$ has a closed graph, it suffices to prove that $H$ is upper semicontinuous; see \cite[Theorem~17.11]{Aliprantis2006-hf}.

Fix $z_0\in Z$ and suppose, toward a contradiction, that $H$ is not upper semicontinuous at $z_0$. Then there exist a weakly open set $U$ of $\mathcal R^\mathcal G$ with $H(z_0)\subset U$ and a sequence $(z_n)_{n\ge1}$ with $z_n\to z_0$ together with
\[
\mu_n\in H(z_n)\setminus U\qquad(n=1,2,\dots).
\]
By the definition of $H(z_n)$, for each $n$ there exists an $\mathcal F$-measurable selection $f_n:T\to X$ of $G_{z_n}$ such that
\[
\mu_n=\mu^{f_n\,|\,\mathcal G}.
\]

Consider the sequence $(\mu^{f_n\mid\mathcal F})$ of RCDs given $\mathcal F$. Since $f_n$ is a measurable selection of $G_{z_n}$ and $G_{z_n}(t)\subset F(t)$, we have
\[
\mu^{f_n\mid\mathcal F}\ \in\ \mathcal R^{(T,\mathcal F)}_{F} \ (\forall n \in \mathbf{N}).
\]
By (B3), the set $\mathcal R^{(T,\mathcal F)}_{F}$ is compact in the weak topology; hence the sequence $\{\mu^{f_n\mid\mathcal F}\}$ admits at least one cluster point $\nu \in \mathcal R^{(T,\mathcal F)}_{F}$.
By definition of $\mathcal R^{(T,\mathcal F)}_{F}$, there exists an $\mathcal F$–measurable selection $f$ of $F$ such that
\(
\nu=\mu^{f\mid\mathcal F}.
\)

Define
\[
\Gamma_n(t)\ :=\ \overline{\bigcup_{m\ge n} G(t,z_m)}\qquad(n=1,2,\dots).
\]
For each $m$, $G(\cdot,z_m)$ is measurable; therefore, the countable union $\bigcup_{m\ge n} G(\cdot,z_m)$ is measurable as well. Consequently, the graph of $\Gamma_n$ is measurable with respect to $\mathcal F_\mu \otimes \mathcal B (X)$, where $\mathcal F_\mu \subset \mathcal T$ denotes the $\mu$-completion of $\mathcal F$; see \cite[Theorem~18.6 and 18.25]{Aliprantis2006-hf}.

Next, consider $\mu^{f_n\mid\mathcal F_\mu}$ and $\mu^{f\mid\mathcal F_\mu}$, the RCDs of $f_n$ and $f$ given $\mathcal F_\mu$. We note that $\mu^{f\mid\mathcal F_\mu}$ is again a cluster point of $(\mu^{f_n\mid\mathcal F_\mu}).$\footnote{Fix $n\in\mathbb N$ and a weak neighborhood $U$ of $\mu^{f\mid\mathcal F_\mu}$.
Choose sets $D_j\in\mathcal F_\mu$ and bounded continuous functions $c_j:X \to \mathbf{R}$, $j=1,\dots,k$, and pick
$\varepsilon>0$ so that the neighborhood
\[
V:=\Big\{\nu:\ \big|\textstyle\int_{D_j}\!\!\int_X c_j(x)\,\nu(t,dx)\,d\lambda
-\int_{D_j}\!\!\int_X c_j(x)\,\mu^{f\mid\mathcal F_\mu}(t,dx)\,d\lambda\big|<\varepsilon,\ \forall j\le k\Big\}
\]
satisfies $V\subset U$; see \cite[Theorem~2.2 (c)]{Balder1988-fb}.
By the definition of the completion, write $D_j=D'_j\cup N_j$ with
$D'_j\in\mathcal F$ and $\lambda(N_j)=0$. Using the same $\{c_j\}$, define a
neighborhood of $\mu^{f\mid\mathcal F}$ by
\[
V^{\mathcal F}:=\Big\{\nu:\ \big|\textstyle\int_{D'_j}\!\!\int_X c_j(x)\,\nu(t,dx)\,d\lambda
-\int_{D'_j}\!\!\int_X c_j(x)\,\mu^{f\mid\mathcal F}(t,dx)\,d\lambda\big|<\varepsilon,\ \forall j\le k\Big\}.
\]
Since $\mu^{f\mid\mathcal F}$ is a cluster point of $(\mu^{f_n\mid\mathcal F})$,
there exists $m\ge n$ with $\mu^{f_m\mid\mathcal F}\in V^{\mathcal F}$.
For each $j$, apply Lemma~\ref{lem:integration} to obtain
\[
\int_{D'_j}\!\!\int_X c_j\,d\mu^{f\mid\mathcal F}\,d\lambda
=\int_{D'_j}\! c_j\!\big(f(t)\big)\,d\lambda
=\int_{D_j}\! c_j\!\big(f(t)\big)\,d\lambda
=\int_{D_j}\!\!\int_X c_j\,d\mu^{f\mid\mathcal F_\mu}\,d\lambda.
\]
The same identities hold with $f_m$
replacing $f$.
Hence, for all $j\le k$,
\[
\Big|\textstyle\int_{D_j}\!\!\int_X c_j\,d\mu^{f_m\mid\mathcal F_\mu}\,d\lambda
-\int_{D_j}\!\!\int_X c_j\,d\mu^{f\mid\mathcal F_\mu}\,d\lambda\Big|
=
\Big|\textstyle\int_{D'_j}\!\!\int_X c_j\,d\mu^{f_m\mid\mathcal F}\,d\lambda
-\int_{D'_j}\!\!\int_X c_j\,d\mu^{f\mid\mathcal F}\,d\lambda\Big|
<\varepsilon,
\]
so $\mu^{f_m\mid\mathcal F_\mu}\in V\subset U$.
Since $U$ and $n$ were arbitrary, $\mu^{f\mid\mathcal F_\mu}$ is a cluster point of
$(\mu^{f_n\mid\mathcal F_\mu})$.}
For arbitrary $n$, define the $(\mathcal F_\mu\otimes\mathcal B(X))$–measurable normal integrand
\[
I_n(t,x):=\begin{cases}
0,& x\in\Gamma_n(t),\\[2pt]
+\infty,& x\notin\Gamma_n(t).
\end{cases}
\]
For an $\mathcal F_\mu$–measurable transition probability $\mu \in \mathcal R^{\mathcal F_\mu}$, set
\[
\Phi_n(\mu):=\int_T\!\int_X I_{n}(t,x)\,\mu(t,dx)\,\lambda(dt).
\]
By the standard lower semicontinuity theorem for integral functionals generated by normal integrands \cite[Theorem~2.2 (a)]{Balder1988-fb}, for every $\varepsilon>0$ there exists a weak neighborhood $\mathcal U_{n,\varepsilon}$ of $\mu^{f\mid\mathcal F_\mu}$ such that
\[
\Phi_n(\nu) > \Phi_n(\mu^{f\mid\mathcal F_\mu})-\varepsilon
\qquad\text{for all }\nu\in\mathcal U_{n,\varepsilon}.
\]
Since $\mu^{f\mid\mathcal F_\mu}$ is a cluster point of $(\mu^{f_n\mid\mathcal F_\mu})_{n\ge1}$, there exists $m\ge n$ with $\mu^{f_m\mid\mathcal F_\mu}\in\mathcal U_{n,\varepsilon}$. Because $f_m$ is a selection of $\Gamma_n$, Lemma~\ref{lem:integration} yields
\[
\Phi_n(\mu^{f_m\mid\mathcal F_\mu})
=\int_T\!\Big(\int_X I_n(t,x)\,\mu^{f_m\mid\mathcal F_\mu}(t,dx)\Big)\,d\lambda(t)
=\int_T I_n\big(t,f_m(t)\big)\,d\lambda(t)=0.
\]
Hence $0>\Phi_n(\mu^{f\mid\mathcal F_\mu})-\varepsilon$, and letting $\varepsilon\downarrow0$ gives
\[
\Phi_n(\mu^{f\mid\mathcal F_\mu})=0.
\]
Applying Lemma~\ref{lem:integration} once more,
\[
\int_T I_n\big(t,f(t)\big)\,d\lambda(t)=0,
\]
and therefore $f(t)\in\Gamma_n(t)$ for $\lambda$–a.e.\ $t$. As $n$ was arbitrary, $f$ is a selection of $\bigcap_{n\ge1}\Gamma_n$. Moreover, for each $t$ the map $z\mapsto G(t,z)$ has a closed graph, hence
\[
\bigcap_{n\ge1}\Gamma_n(t)\subset G(t,z_0)\ \text{for all } t \in T,
\]
and thus $f$ is a selection of $G_{z_0}$.

We next show that $\mu^{f\mid\mathcal G}$ is a cluster point of $\{\mu^{f_n\mid\mathcal G}\}$. Fix any $n \in \mathbf{N}$ and let $U$ be an arbitrary neighborhood of $\mu^{f\mid\mathcal G}$. Choose finitely many $\mathcal G$–measurable bounded Carathéodory functions
\(
\varphi_1,\dots,\varphi_k
\)
and $\varepsilon>0$ such that the neighborhood of $\mu^{f\mid\mathcal G},$
\[
V:=\Big\{\nu \in \mathcal R^\mathcal G(X):\ \big|\textstyle\int_T\!\int_X \varphi_j(t,x)\,\nu(t,dx)\,d\lambda
-\int_T\int_X \varphi_j\big(t,x\big)\mu^{f\mid\mathcal G}(t,dx)\,d\lambda\big|<\varepsilon,\ \forall j\le k\Big\}
\]
satisfies $V\subset U$.
Since $\mathcal G\subset\mathcal F$, each $\varphi_j$ is also $\mathcal F$–measurable. Using the same functions $\{\varphi_j\}$, define a neighborhood of $\mu^{f\mid\mathcal F}$ by
\[
V^{\mathcal F}:=\Big\{\nu \in \mathcal R^\mathcal F(X):\ \big|\textstyle\int_T\!\int_X \varphi_j(t,x)\,\nu(t,dx)\,d\lambda
-\int_T\!\int_X \varphi_j(t,x)\,\mu^{f\mid\mathcal F}(t,dx)\,d\lambda\big|<\varepsilon,\ \forall j\le k\Big\}.
\]
Because $\mu^{f\mid\mathcal F}$ is a cluster point of $\{\mu^{f_m\mid\mathcal F}\}$, there exists $m\ge n$ with $\mu^{f_m\mid\mathcal F}\in V^{\mathcal F}$. Applying Lemma~\ref{lem:integration} to each $\varphi_j$, both for $\mathcal F$ and for $\mathcal G$, we obtain
\[
\int_T\!\int_X \varphi_j(t,x)\,\mu^{f\mid\mathcal F}(t,dx)\,d\lambda
=\int_T \varphi_j\big(t,f(t)\big)\,d\lambda
=\int_T\!\int_X \varphi_j(t,x)\,\mu^{f\mid\mathcal G}(t,dx)\,d\lambda,
\]
and similarly
\[
\int_T\!\int_X \varphi_j(t,x)\,\mu^{f_m\mid\mathcal F}(t,dx)\,d\lambda
=\int_T \varphi_j\big(t,f_m(t)\big)\,d\lambda
=\int_T\!\int_X \varphi_j(t,x)\,\mu^{f_m\mid\mathcal G}(t,dx)\,d\lambda.
\]
Therefore, for all $j=1,2, \dots ,k$,
\[
\Big|\textstyle\int_T\!\int_X \varphi_j\,d\mu^{f_m\mid\mathcal G}\,d\lambda
-\int_T\!\int_X \varphi_j\,d\mu^{f\mid\mathcal G}\,d\lambda\Big|
=
\Big|\textstyle\int_T\!\int_X \varphi_j\,d\mu^{f_m\mid\mathcal F}\,d\lambda
-\int_T\!\int_X \varphi_j\,d\mu^{f\mid\mathcal F}\,d\lambda\Big|
<\varepsilon,
\]
so $\mu^{f_m\mid\mathcal G}\in V \subset U$. As $n$ and $U$ were arbitrary, $\mu^{f\mid\mathcal G}$ is a cluster point of $\{\mu^{f_n\mid\mathcal G}\}$.

Finally, by construction each $\mu^{f_n\mid\mathcal G}$ lies in the closed set $U^c$, so any of their cluster points must also lie in $U^c$. Yet we have shown $\mu^{f\mid\mathcal G}\in H(z_0)\subset U$ and that $\mu^{f\mid\mathcal G}$ is a cluster point of $\{\mu^{f_n\mid\mathcal G}\}$, a contradiction. Therefore $H$ is upper semicontinuous at $z_0$.
\end{proof}

\subsection{Proof of the necessity part of Theorem~\ref{the:main} }

Decompose \(T\) into two \(\mathcal F\)-measurable sets \(T_1,T_2\) with \(T=T_1\cup T_2\) and \(T_1\cap T_2=\emptyset \) such that the trace probability spaces \((T_1,\mathcal T^{T_1},\lambda^{T_1})\) and \((T_2,\mathcal T^{T_2},\lambda^{T_2})\) are, respectively, atomless and purely atomic; without loss of generality, assume \(\lambda(T_1)>0\).
Let \(n\ge2\) and pick pairwise distinct points \(x_0,x_1,\dots,x_n\) in the infinite Polish space \(X\).

\begin{proof}[Proof of B1 $\Rightarrow$ NE]
Define the $\mathcal F$-measurable closed-valued correspondence $F:T\rightrightarrows X$ by
\[
F(t)=
\begin{cases}
\{x_1,\dots,x_n\}, & t\in T_1,\\
\{x_0\}, & t\in T_2.
\end{cases}
\]
For each $k=1,\dots,n$, define an $\mathcal F$-measurable transition probability $G_k$ by
\[
G_k(t)=
\begin{cases}
\delta_{x_k}, & t\in T_1,\\
\delta_{x_0}, & t\in T_2,
\end{cases}
\]
where $\delta_x$ denotes the Dirac measure at $x$.
Let $f_k:T\to X$ be the $\mathcal F$-measurable selection of $F,$ defined as
\[
f_k(t)=
\begin{cases}
x_k, & t\in T_1,\\
x_0, & t\in T_2.
\end{cases}
\]
By construction, the RCD of $f_k$ given $\mathcal F$ satisfies
\[
\mu^{f_k\mid\mathcal F}=G_k,
\]
hence $G_k\in \mathcal R^{(\mathcal T,\mathcal F)}_{F}$ for each $k$.
By the convexity of $\mathcal R^{(\mathcal T,\mathcal F)}_{F}$,
we have
\[
H:=\frac{1}{n}\sum_{k=1}^n G_k \in \mathcal R^{(\mathcal T,\mathcal F)}_{F}.
\]
Therefore there exists a $\mathcal T$-measurable selection $f:T\to X$ of $F$ such that
\[
\mu^{f\mid\mathcal F}=H.
\]

Consider the restricted probability space $(T_1,\mathcal T^{T_1},\lambda^{T_1})$ and its sub-$\sigma$-algebra $\mathcal F^{T_1}$.
For $k=1,\dots,n,$ set
\[
E_k:=\{t\in T_1:\ f(t)=x_k\}\in\mathcal T^{T_1}.
\]
By the defining property of the RCD, for any $B\in\mathcal F^{T_1}$ we have
\[
\begin{aligned}
\lambda^{T_1}(E_k\cap B)
&=\frac{1}{\lambda(T_1)}\,\lambda(\{f = x_k\}\cap B)
=\frac{1}{\lambda(T_1)}\int_B \mu^{f\mid\mathcal F}(\{x_k\})\,d\lambda\\
&=\frac{1}{\lambda(T_1)}\int_B \frac{1}{n}\,d\lambda
=\frac{1}{n}\,\lambda^{T_1}(B).
\end{aligned}
\]
In particular, taking $B=T_1$ yields $\lambda^{T_1}(E_k)=1/n$, and hence for all $B\in\mathcal F^{T_1}$,
\[
\lambda^{T_1}(E_k\cap B)=\lambda^{T_1}(E_k)\,\lambda^{T_1}(B).
\]
Therefore each $E_k$ is independent of $\mathcal F^{T_1}$ under the probability space $(T_1,\mathcal T^{T_1},\lambda^{T_1})$. Since, for any $k$, we have $\lambda^{T_1}(E_k)=1/n$, it follows from Lemma 7 in \cite{He2017-cr} that $\mathcal T$ is nowhere equivalent to $\mathcal F.$
\end{proof}

\begin{proof}[Proof of B2 $\Rightarrow$ NE]
Let \(F\), \(\{G_k\}_{k=1}^n\), and \(H\) be exactly as introduced in the proof of B1 $\Rightarrow$ NE.
Let \(U\) be an arbitrary weak-topology neighborhood of \(H\).
Then, there exist bounded \(\mathcal F\)-measurable Carathéodory functions
\(f_1,\ldots,f_m:T\times X\to\mathbb R\) and \(\varepsilon>0\) such that
\[
V
:=\Bigl\{\mu:\ \bigl|\Phi_{f_j}(\mu)-\Phi_{f_j}(H)\bigr|<\varepsilon,\ j=1,\ldots,m\Bigr\}
\subset U,
\]
where 
\[\Phi_f(\mu):=\int_T\!\int_X f(t,x)\,\mu(t,dx)\,d\lambda(t).
\]

Consider these functions $\{f_j\}$ on \(T_1\times X\).
Since \(T_1\) is nonatomic and, for each \(j=1,\ldots,m\),
\[
\int_{T_1}\!\int_X f_j(t,x)\,H(t,dx)\,d\lambda(t)
=\int_{T_1} \frac1n\sum_{k=1}^n f_j(t,x_k)\,d\lambda(t),
\]
we can apply (extended) Lyapunov’s convexity theorem \cite[Theorem~IV.17]{Castaing1977-pk} to the \(\mathcal F\)-measurable functions \(F_k:T_1\to\mathbb R^m\) defined by
\[
F_k(t):=\bigl(f_1(t,x_k),\ldots,f_m(t,x_k)\bigr),\qquad k=1,\ldots,n,
\]
with equal weights \(\alpha_k=1/n\).
Then there exists an \(\mathcal F\)-measurable partition \(B_1,\ldots,B_n\) of \(T_1\) such that
\[
\int_{T_1} \frac1n\sum_{k=1}^n F_k\,d\lambda=\sum_{k=1}^n \int_{B_k} F_k\,d\lambda,
\]
equivalently, for each \(j=1,\ldots,m\),
\[
\int_{T_1} \frac1n\sum_{k=1}^n f_j(t,x_k)\,d\lambda(t)
=\sum_{k=1}^n \int_{B_k} f_j(t,x_k)\,d\lambda(t).
\]

Define an \(\mathcal F\)-measurable function \(s_{T_1}:T_1\to\{x_1,\ldots,x_n\}\) by \(s_{T_1}(t)=x_k\) on \(B_k\),
and extend it to \(T\) by
\[
s(t):=\begin{cases}
x_0,& t\in T_2,\\
s_{T_1}(t),& t\in T_1.
\end{cases}
\]
Then \(s\) is a selection of \(F\).
For every \(j=1,\ldots,m\),
\[
\int_T f_j\!\bigl(t,s(t)\bigr)\,d\lambda(t)
=\int_{T_2} f_j(t,x_0)\,d\lambda(t)+\sum_{k=1}^n\int_{B_k} f_j(t,x_k)\,d\lambda(t)
=\int_T\!\int_X f_j(t,x)\,H(t,dx)\,d\lambda(t).
\]
Consequently \(\delta_s\in V\subset U\), where \(\delta_s(t,B) = \delta_{s(t)}(B)\) for all $t \in T$ and $B \in \mathcal{B}(X).$

Since \(s\) is \(\mathcal F\)-measurable, its RCD given \(\mathcal F\) is \(\delta_s\), i.e.,
\( \mu^{s\mid\mathcal F}=\delta_s\).
Hence \(\mu^{s\mid\mathcal F}\in U\).
Because \(U\) was arbitrary, \(H\) belongs to the weak closure of \(\mathcal R_{F}^{(T,\mathcal F)}\).
By assumption, \(\mathcal R_{F}^{(T,\mathcal F)}\) is closed; hence \(H\in\mathcal R_{F}^{(T,\mathcal F)}\).
The remainder is identical to the proof of B1 $\Rightarrow$ NE.
\end{proof}

\begin{proof}[Proof of B3 $\Rightarrow$ NE, B4 $\Rightarrow$ NE, and B5 $\Rightarrow$ NE]
First suppose either B3 or B4. Then, for every compact-valued $\mathcal F$-measurable correspondence $F:T\rightrightarrows X$, the set $\mathcal R_F^{(\mathcal T,\mathcal F)}$ is closed. Since the correspondence $F$ used in the proof of $\mathrm{B1}\Rightarrow\mathrm{NE}$ is compact-valued, the proof of $\mathrm{B2}\Rightarrow\mathrm{NE}$ applies without modification, and nowhere equivalence follows.

Next assume B5. For the $H$ defined in the proof of $\mathrm{B1}\Rightarrow\mathrm{NE}$ there exists a $\mathcal T$-measurable function $f:T\to X$ such that $\mu^{f\mid\mathcal F}=H$. The remainder of the argument is the same as in the proof of $\mathrm{B1}\Rightarrow\mathrm{NE}$.
\end{proof}

\section*{Acknowledgment}
This study is supported by JST SPRING, Grant Number JPMJSP2124.11.

\bibliographystyle{elsarticle-harv} 
\bibliography{paperpile-2}

\end{document}